\documentclass[reqno,11pt]{amsart}
\voffset=-1.75cm \hoffset=-1.75cm \textheight=24.0cm \textwidth=16.2cm
\usepackage{amsmath, amsfonts,amsthm,amssymb,amsbsy,upref,color,graphicx,amscd,hyperref,enumerate}
\usepackage[active]{srcltx}
\usepackage[latin1]{inputenc}
\usepackage{bbm}
\usepackage{pgf}
\usepackage{xfrac}

\def\ds{\displaystyle}
\def\eps{{\varepsilon}}
\def\N{\mathbb{N}}

\def\R{\mathbb{R}}

\def\HH{\mathcal{H}}

\def\KK{\mathcal{K}}

\def\G{\mathcal W_{\sfrac32}}
\def\GG{\mathcal W}


\newcommand{\be}{\begin{equation}}
\newcommand{\ee}{\end{equation}}

\newcommand{\de}{\partial}
\newcommand{\dist}{{\rm {dist}}}

\newcommand{\reg}{{\rm Reg}}
\newcommand{\sing}{{\rm Sing}}
\newcommand{\other}{{\rm Other}}
\newcommand{\freq}{\mathcal{S}}

\theoremstyle{plain}
\newtheorem{theo}{Theorem}

\numberwithin{equation}{section}
\theoremstyle{plain}
\newtheorem{teo}{Theorem}[section]
\newtheorem{lemma}[teo]{Lemma}

\newtheorem{prop}[teo]{Proposition}

\theoremstyle{remark}
\newtheorem{oss}[teo]{Remark}
\newcommand{\ind}{\mathbbm{1}}

\title[Epiperimetric inequalities for the thin obstacle problem]{Direct epiperimetric inequalities for the thin obstacle problem and applications}

\author{Maria Colombo, Luca Spolaor, Bozhidar Velichkov}

\address {Maria Colombo: \newline \indent
Institute for Theoretical Studies, ETH Z\"urich,
	\newline \indent
 Clausiusstrasse 47, CH-8092 Z\"urich, Switzerland
 	}
\email{maria.colombo@eth-its.ethz.ch}

\address {Luca Spolaor: \newline \indent
	Massachusetts Institute of Technology (MIT), 
	\newline \indent
	77 Massachusetts Avenue, Cambridge 
	MA 02139, USA}
\email{lspolaor@mit.edu}

\address {Bozhidar Velichkov: \newline \indent
Laboratoire Jean Kuntzmann (LJK), Universit\'e Grenoble Alpes
\newline \indent
B\^atiment IMAG, 700 Avenue Centrale, 38401 Saint-Martin-d'H\`eres}
\email{bozhidar.velichkov@univ-grenoble-alpes.fr}

\makeindex

\begin{document}



\begin{abstract}
For the thin-obstacle problem, we prove by a new direct method that in any dimension the Weiss' energies with frequency $\sfrac32$ and $2m$, for $m\in \N$, satisfy an epiperimetric inequality, in the latter case of logarithmic type. In particular, at difference from the classical statements, we do not assume any a-priori closeness to a special class of homogeneous function. In dimension $2$, we also prove the epiperimetric inequality at any free boundary point.

As a first application, we improve the set of admissible frequencies for blow ups, previously known to be $\lambda \in \{\sfrac 32\} \cup [2,\infty)$, and we classify the global $\lambda$-homogeneous minimizers, with $\lambda\in [\sfrac32,2+c]\cup\bigcup_{m\in \N}(2m-c_m^-,2m+c_m^+)$, showing as a consequence that the frequencies $\sfrac32$ and $2m$ are isolated. 

Secondly, we give a short and self-contained proof of the regularity of the free boundary previously obtained by Athanasopoulos-Caffarelli-Salsa \cite{AtCaSa} for regular points and Garofalo-Petrosyan \cite{GaPe} for singular points, by means of an epiperimetric inequality of logarithmic type which applies for the first time also at all singular points of thin-obstacle free boundaries. In particular we improve the $C^1$ regularity of the singular set with frequency $2m$ by an explicit logarithmic modulus of continuity.

\end{abstract}

\maketitle

\textbf{Keywords:} epiperimetric inequality, logarithmic epiperimetric inequality, monotonicity formula, thin obstacle problem, free boundary, singular points, frequency function





\section{Introduction}


In this paper we study the regular and singular parts of the free-boundary for solutions of the \emph{thin-obstacle problem}, that is the minimizers of the Dirichlet energy
$$
\mathcal{E}(u):=\int_{B_1}|\nabla u|^2\,dx
$$
in the class of admissible functions
$$
\mathcal A:=\big\{ u\in H^1(B_1)\,:\,u\geq 0 \mbox{ on }B_1'   \,,\,\,u(x',x_d)=u(x',-x_d) \mbox{ for every }(x',x_d)\in B_1 \big\}\,,
$$
with Dirichlet boundary conditions $u=w$ on $\de B_1 $.
Here and in the rest of the paper $d\geq 2$, $B_1\subset \R^d$ denotes the unit ball, $B_1' = B_1 \cap \{ x_{d}=0 \}$,
 for any $x=(x_1,\dots,x_d)\in\R^d$ we denote by $x'$ the vector of the first $d-1$ coordinates, $x'=(x_1,\dots,x_{d-1})$, and $w \in \mathcal A$ is a given boundary datum. 

\noindent Given a minimizer $u\in \mathcal A$ of $\mathcal E$ with Dirichlet boundary conditions the \emph{coincidence set} $\Delta(u)\subset B_1'$ is defined as $\Delta(u):=\{(x',0)\in B_1'\,:\, u(x',0)=0\}$ and the \emph{free boundary $\Gamma(u)$ of $u$} is the topological boundary of the coincidence set in the relative topology of $B_1'$. 
\subsection{State of the art} Athanasopoulos and Caffarelli \cite{atcaffa} proved that the optimal regularity of any local minimizer $u$ is $C^{1,1/2}(B_1^+)$. Athanasopoulos, Caffarelli and Salsa pioneered the study of the regularity of the free boundary $\Gamma(u)$ in \cite{AtCaSa}. They showed in \cite[Lemma 1]{AtCaSa} that for every $x_0\in \Gamma(u)$ the \emph{Almgren's frequency function}
$$
(0,1-|x_0|) \ni r\mapsto N^{x_0}(r,u):=\frac{r\int_{B_r(x_0)}|\nabla u|^2\,dx}{\int_{\de B_r(x_0)}u^2\,d\HH^{d-1}}\,
$$ 
is monotone nondecreasing in $r$. Thus, the limit 
$$N^{x_0}(0,u):=\lim_{r\to 0}N^{x_0}(r,u)$$
exists for every point $x_0\in\Gamma(u)$ and the free boundary can be decomposed according to the value of the frequency function in zero. We denote the set of points of frequency $\lambda\in\R$ by  
$$\freq^{\lambda}(u):=\{x\in \Gamma(u)\,:\,N^x(0,u)=\lambda\}.$$
Using the frequency function one can split the free-boundary into three disjoint sets 
\begin{itemize}
\item the \emph{regular free boundary} which consists of the points with the lowest possible frequency
$$
\reg(u):= \freq^{\sfrac32}(u)\,;
$$
\item the points with even integer frequency $\freq^{2m}(u)$, whose union  by definition constitutes the set of \emph{singular points} $\sing(u)$
$$\sing(u):=\bigcup_{m\in \N} \freq^{2m}(u);$$
\item the remaining part, denoted in the literature by $\other(u)$.
\end{itemize}

The first result on the regularity of the free boundary for the thin-obstacle problem is due to Athanasopoulos, Caffarelli and Salsa. In \cite{AtCaSa} they give a complete description of the blow-up limits at the points of frequency $\sfrac32$ and prove that the regular free boundary $\reg(u)$ is locally a $(d-2)$-dimensional $C^{1,\alpha}$ hypersurface in $\R^{d-1}$. Later the regular part of the free boundary has been shown to be $C^\infty$ in \cite{yashrobin, korush} and analogous results were extended to more general fractional laplacian (see \cite{CaSaSi}), of which the thin-obstacle is a particular example.

Garofalo and Petrosyan (cp. \cite[Theorem 2.6.2]{GaPe}) showed that $\sing(u)$ is precisely the set of points where the coincidence set is asymptotically negligible, that is
\begin{equation}
\label{eqn:GP-dens0}
\sing(u)=\left\{x_0\in\Gamma(u)\,:\,\lim_{r\to 0}\frac{\HH^{d-1}(\Delta(u)\cap B'_r(x_0))}{\HH^{d-1}(B'_r(x_0))}=0\right\}\,.
\end{equation}
With the help of new monotonicity formulas of Weiss and Monneau type, Garofalo and Petrosyan showed that each set $\mathcal{S}^{2m}$ is contained in a countable union of $C^1$ manifolds in $\R^{d-1}$.

In general the set $\other(u)$ is not empty and is not even small compared to the free boundary $\Gamma(u)$. Indeed, in dimension two the function $h(r,\theta)=r^{2m-\sfrac12}\sin\left(\frac{1-4m}{2}\theta\right)$ is a global solution with frequency $2m-\sfrac12$ in zero. Using this example one can easily construct global solutions in any dimension $d\ge 2$ whose entire free-boundary is a $(d-2)$-dimensional plane consisting only of points with frequency $2m-\sfrac12$. 
Recently, Focardi and Spadaro \cite{FoSp-GMT} proved the $\HH^{d-2}$-rectifiability of the set $\other(u)$ and that it consists of points of frequency $2m-\sfrac12$ up to a set of zero $\HH^{d-2}$ measure, but nothing is known up to now regarding its regularity in dimension $d>2$. We notice that in some special cases, the set $\other(u)$ might be empty. Indeed, Barrios, Figalli and Ros-Oton proved in \cite{barriosfiga} that this is precisely the case when the admissibility condition $u\ge 0$ is replaced by $u\ge \varphi$ on $\R^{d-1}$, where $\varphi$ is a non-zero superharmonic obstacle. 

A different approach for the regularity of the free boundary was proposed by Garofalo-Petrosyan-Vega-Garcia \cite{GaPeVe} and Focardi-Spadaro \cite{FoSp}, following the result of Weiss \cite{Weiss2} for the classical obstacle problem. For points of the regular free boundary $x_0\in \reg(u)=\freq^{\sfrac32}$, they prove an epiperimetric inequality for the \emph{Weiss' boundary adjusted energy}
$$\GG_{\lambda}^{x_0}(r,u):=\frac1{r^{d-2+2\lambda}}\int_{B_r(x_0)}|\nabla u|^2\,dx-\frac{\lambda}{r^{d-1+2\lambda}}\int_{\de B_r(x_0)}u^2\,d\HH^{d-1}\,,$$
which allows to quantify the convergence of $\GG_{\lambda}^{x_0}(r,u)$ as $r\to 0$ to be of H\"older type and provides an alternative proof of the $C^{1,\alpha}$ regularity of the free boundary. The epiperimetric inequality approach was first introduced by Reifenberg \cite{Reif2}, White \cite{Wh} and Taylor \cite{taylor} in the context of minimal surfaces, later brought to the classical obstacle problem by Weiss \cite{Weiss2} and recently developed in \cite{SpVe} with new contributions in the framework of free boundaries.

\subsection{Main results} In this paper we present a new interpretation of the epiperimetric inequality not as a property of the energy and its homogeneous minimizers, but as a property of the family of Weiss' boundary adjusted energies $\GG_{\lambda}$, $\lambda=\sfrac32,2m$. Indeed this approach \emph{doesn't require any a-priori knowledge of the admissible blow-ups} (which even in the previous results \cite{GaPeVe, FoSp} about the regular points was assumed, by requiring a suitable closeness to the already-known blow up), and actually yields their classification. Moreover, as usual, it gives a short, self-contained proof of the known regularity of $\reg(u)$
and, thanks to the direct arguments at the basis of the epiperimetric inequality, allows to obtain a new logarithmic modulus of continuity for the singular set, which improves the results of \cite{FoSp,GaPeVe}.

\subsubsection{Epiperimetric inequalities for $\GG_\lambda$, $\lambda=\sfrac32, 2m$, in any dimension}

In this section we present our epiperimetric inequalities. Notice that, at difference from the existing literature, they hold for \emph{any trace $c$} without any closeness assumption to the admissible blow ups.
 \smallskip
For the energy $\GG_{\sfrac32}$ we give a short and self-contained proof of the following statement.
\begin{theo}[Epiperimetric inequality for $\GG_{\sfrac32}$]\label{t:epi:3/2}
Let $d\ge 2$ and $B_1\subset \R^{d}$. 
Then for every $c\in H^1(\partial B_1)$ such that its $3/2$-homogeneous extension $z(r,\theta):=r^{3/2}c(\theta)$ belongs to $\mathcal A$, there exists $v\in \mathcal A$ such that $v=c$ on $\partial B_1$ and 
	\begin{equation}\label{e:epi:3/2}
	\GG_{\sfrac32}(v)\le \left(1-\frac{1}{2d+3}\right)\GG_{\sfrac32}(z).
	\end{equation}
\end{theo}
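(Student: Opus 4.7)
My plan is to prove the inequality via spherical-harmonic decomposition and an admissible modification of the harmonic extension.

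First, let $\{Y_{k,j}\}$ be an $L^2(\partial B_1)$-orthonormal basis of spherical harmonics of degree $k$ even in $x_d$, and write $c = \sum_{k,j} c_{k,j}Y_{k,j}$. Using $-\Delta_{\mathbb{S}^{d-1}} Y_{k,j} = \mu_k Y_{k,j}$ with $\mu_k = k(k+d-2)$, a direct computation yields
$$\GG_{\sfrac32}(z) = \sum_{k,j} \gamma_k\, c_{k,j}^2, \qquad \gamma_k := \frac{(k-\sfrac32)(k+d-\sfrac12)}{d+1}.$$
For the harmonic extension $h(x) := \sum_{k,j} c_{k,j} |x|^k Y_{k,j}(x/|x|)$, one computes $\GG_{\sfrac32}(h) = \sum_{k,j}(k-\sfrac32)\,c_{k,j}^2$, and the key algebraic identity
$$(k-\sfrac32) - \bigl(1 - \tfrac{1}{2d+3}\bigr)\gamma_k = -\frac{2(k-\sfrac32)(k-2)}{2d+3}$$
has right-hand side $\leq 0$ for every non-negative integer $k$, vanishing only at $k=2$. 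Summing over modes gives $\GG_{\sfrac32}(h) \leq (1 - 1/(2d+3))\GG_{\sfrac32}(z) - \sigma$ with slack $\sigma := \sum_{k,j}\frac{2(k-\sfrac32)(k-2)}{2d+3}c_{k,j}^2 \geq 0$. If $h$ were admissible the proof would conclude with $v=h$.

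The main obstacle is that $h$ may violate $h \geq 0$ on $B_1'$. I would choose $v$ to be the thin-obstacle minimizer with boundary datum $c$, so that $v = h + \phi$ with $\phi \in H_0^1(B_1)$. Harmonicity of $h$ then gives the Dirichlet-orthogonality $\int_{B_1}|\nabla v|^2 = \int_{B_1}|\nabla h|^2 + \int_{B_1}|\nabla \phi|^2$, and the epiperimetric inequality reduces to the single estimate $\int_{B_1}|\nabla \phi|^2 \leq \sigma$. To bound $\phi$, I would use that it solves an obstacle problem with zero boundary data and obstacle $-h^-$ on $B_1'$, and compare to the explicit competitor given by the even harmonic extension of $h^-|_{B_1'}$ across $\{x_d=0\}$, whose energy is controlled by the $H^{\sfrac12}(B_1')$-trace norm of $h^-$.

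The hardest step will be the quantitative matching of this trace bound with $\sigma$ mode-by-mode: the slack coefficient $2(k-\sfrac32)(k-2)/(2d+3)$ grows quadratically in $k$ while trace-type controls on $\phi$ scale linearly, giving ample room at high frequencies; at low frequencies (especially $k=2$, where the slack vanishes), one must exploit the admissibility $c \geq 0$ on the equator delicately---in the pure $k=2$ case for instance, $h|_{B_1'}$ is a quadratic form which is positive semi-definite by admissibility, so $\phi$ vanishes identically and no correction is needed. An alternative, perhaps preferable in the spirit of the title, is to bypass the minimizer by constructing $v$ directly through an explicit formula built from $h$ and a carefully chosen positive correction.
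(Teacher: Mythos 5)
Your Fourier setup is correct: the coefficients $\gamma_k$, the identity $(k-\sfrac32)-(1-\tfrac{1}{2d+3})\gamma_k=-\tfrac{2(k-\sfrac32)(k-2)}{2d+3}$, and the reduction $\GG_{\sfrac32}(v)=\GG_{\sfrac32}(h)+\int_{B_1}|\nabla\phi|^2$ for $v=h+\phi$ with $\phi\in H^1_0(B_1)$ all check out. But the proof is not complete: everything hinges on the single estimate $\int_{B_1}|\nabla\phi|^2\le\sigma$, and this is exactly the step you leave as a plan. It is not a routine completion. The slack $\sigma$ vanishes precisely on the degree-$2$ modes, while the negative part $h^-|_{B_1'}$ is a \emph{nonlinear} function of the trace that mixes all modes; in particular it cannot be "matched mode-by-mode" with $\sigma$ as you suggest. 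Near a degenerate admissible quadratic (e.g.\ $c=p_2+\delta q$ with $p_2|_{\{x_d=0\}}$ a positive semi-definite but degenerate form), the capacity-type energy of the needed correction and $\sigma$ are both of order $\delta^2$ up to logarithms and dimensional powers, so the inequality, if true, requires a genuinely quantitative argument exploiting $c\ge 0$ on the equator — none of which is supplied. As written, the argument establishes the theorem only in the special case where the harmonic extension happens to satisfy $h\ge0$ on $B_1'$.

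The paper's proof avoids this obstruction entirely by a different choice of competitor. Instead of taking the harmonic extension and correcting it, it decomposes $c=C h_e+c_0 u_0+\phi$ with $u_0=|x_d|^{\sfrac32}$, where $Ch_e+c_0u_0$ captures the modes with $\lambda_j\le\lambda(2)$ and $\phi$ only the higher ones, and sets $v=Cr^{\sfrac32}h_e+c_0r^{\sfrac32}u_0+r^2\phi$: the low-mode part is kept $\sfrac32$-homogeneous and anchored to profiles that are nonnegative (resp.\ vanish) on $B_1'$, and only the high modes are raised to homogeneity $2$. Admissibility is then immediate from $v\ge r^2c\ge 0$ on $B_1'$ (using $r^{\sfrac32}\ge r^2$ and $h_e\ge0$), the cross terms computed via the distributional Laplacians of $h_e$ and $u_0$ cancel exactly in the combination $\GG_{\sfrac32}(v)-(1-\tfrac{1}{2d+3})\GG_{\sfrac32}(z)$, and the gain comes from the $c_0^2\int_{B_1}|x_d|\,dx$ term together with Lemma~\ref{l:thin_fourier} applied to the high modes. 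If you want to salvage your route, you must either prove the trace estimate $\int_{B_1}|\nabla\phi|^2\le\sigma$ rigorously (including the low-frequency regime where $\sigma$ degenerates), or switch to an explicit competitor of the paper's type for which admissibility is built in.
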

\noindent A similar statement was obtained in \cite{GaPeVe, FoSp}, even though in these papers a further assumption is required (the closeness of the boundary datum $c$ to the set of admissible blow ups of frequency $\sfrac32$) and it is based on a contradiction argument. The proof of Theorem~\ref{t:epi:3/2} exhibits instead an explicit energy competitor $v$, after decomposing the boundary datum $c$ in terms of the eigenfunctions of the Laplacian on $\partial B_1$. Roughly speaking, in $v$ we extend with homogeneity $\alpha>\sfrac32$ the high modes on the sphere, whereas the rest is extended with the same homogeneity as $z$.
This line of proof was followed by the authors for the classical obstacle problem in \cite{cospve-classicobst} and by the last two named authors for the Alt-Caffarelli functional in dimension $2$ in~\cite{SpVe}.\\

In analogy to the results on the classical obstacle problem \cite{cospve-classicobst}, our direct approach allows to obtain a logarithmic epiperimetric inequality for the family of energies $\GG_{2m}$, $m\in \N$, in any dimension. This, together with \cite{cospve-classicobst}, is the first instance in the literature (even in the context of minimal surfaces) of an epiperimetric inequality of logarithmic type, and the first instance in the context of the lower dimensional obstacle problems where an epiperimetric inequality for singular points has a direct proof. This result allows us to prove a complete and self-contained regularity result for $\sing(u)$ and improve the known results by giving an explicit modulus of continuity. Further applications to other singular points of the thin obstacle problem and to the fractional obstacle problem for any $s\in (0,1)$ will be presented in \cite{CoSpVe-prep,future}.

\begin{theo}[Logarithmic epiperimetric inequality for $\GG_{2m}$]\label{t:epi:2m} Let $d\ge 2$, $m\in \N$.
For every function $c\in H^1(\partial B_1)$ such that its $2m$-homogeneous extension $z(r,\theta)=r^{2m}c(\theta)$ is in $\mathcal A$ and 
\begin{equation}\label{e:orchecattive}
\int_{\partial B_1}c^2\,d\HH^{d-1}\le 1\qquad\text{and}\qquad |\GG_{2m}(z)|\le 1\,,
\end{equation}
there are a constant $\eps=\eps(d,m)>0$ and a function $h\in \mathcal A$, with $h=c$ on $\partial B_1$, satisfying 
	\begin{equation}\label{e:epi:2m}
	\GG_{2m}(h)\le \GG_{2m}(z) \big(1-\eps \,|\GG_{2m}(z)|^{\gamma}\big),\qquad \mbox{where}\quad \gamma:=\frac{d-2}{d}\,.
	\end{equation}
\end{theo}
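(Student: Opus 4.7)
The plan is to construct an explicit competitor $h$ by adapting the direct, spectral method introduced by the authors in \cite{cospve-classicobst} for the classical obstacle problem. The idea is to decompose $c$ in spherical harmonics, reweight the radial extensions mode by mode to extract a quadratic gain on the non-null modes, and treat the degenerate $2m$-homogeneous null mode via a quantitative Lojasiewicz-type correction that accounts for the obstacle constraint; the dimension-dependent exponent $\gamma=(d-2)/d$ will emerge precisely from this last step.

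First, I would expand $c = \sum_{n\ge 0} c_n$ in spherical harmonics even in $x_d$, with $c_n$ in the eigenspace of $-\Delta_{\partial B_1}$ of eigenvalue $n(n+d-2)$. A direct computation on $2m$-homogeneous extensions yields the diagonal form
$$\GG_{2m}(z) \;=\; \frac{1}{d+4m-2}\sum_{n\ge 0}(n-2m)(n+2m+d-2)\,\|c_n\|_{L^2(\partial B_1)}^2,$$
identifying degrees $n>2m$ as stable, $n<2m$ as unstable, and $n=2m$ as the (obstacle-sensitive) null direction. Writing correspondingly $c=c^<+c^=+c^>$ and $q(r,\theta):=r^{2m}c^=(\theta)$, I would define
$$h(r,\theta) \;:=\; \sum_{n<2m} r^{\alpha_n}c_n(\theta) \;+\; q(r,\theta) \;-\; \phi(r,\theta) \;+\; \sum_{n>2m}r^{\beta_n}c_n(\theta),$$
where the exponents $\alpha_n<2m<\beta_n$ are chosen to (quasi-)minimize each mode's Weiss contribution, and $\phi\ge 0$ is a small correction (even in $x_d$, supported near $\{q<0\}\cap B'_1$) enforcing admissibility $h\ge 0$ on $B'_1$. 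On each non-null mode the radial reweighting produces a definite quadratic improvement proportional to the corresponding stable or unstable contribution to $\GG_{2m}(z)$, in the spirit of the mechanism behind Theorem~\ref{t:epi:3/2}, with no logarithmic loss.

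The main obstacle is the null mode: since $\GG_{2m}$ vanishes identically on degree-$2m$ spherical harmonics, any gain from the null piece must come entirely from the obstacle-restoring correction $\phi$ and is therefore of higher-than-quadratic order. Following the strategy of \cite{cospve-classicobst}, I would build $\phi$ from a cutoff of $q^-|_{B'_1}$ harmonically extended into $B_1$ and then establish a quantitative Lojasiewicz-type inequality
$$\|c^=\|_{L^2(\partial B_1)}^{\,2+\gamma} \;\le\; C\,\bigl(\GG_{2m}(q)-\GG_{2m}(q-\phi)\bigr),$$
valid on the finite-dimensional space of degree-$2m$ harmonic polynomials even in $x_d$. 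The inequality is obtained by interpolating between $L^2$ and $L^\infty$ norms of the polynomials on the codimension-one slice $\partial B_1\cap\{x_d=0\}$ via Sobolev embedding; combined with the normalizations~\eqref{e:orchecattive}, this interpolation forces the sharp exponent $\gamma=(d-2)/d$. Producing this sharp exponent rather than a cruder polynomial loss is the crux of the argument.
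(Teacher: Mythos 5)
Your overall setup (Fourier decomposition on the sphere, the diagonal formula for $\GG_{2m}$ of a homogeneous extension, and an $L^2$--$L^\infty$ interpolation on the equator as the source of $\gamma=\frac{d-2}{d}$) matches the paper's strategy, but the treatment of the $2m$-mode is misconceived and the key quantitative step is missing. The central claimed inequality $\|c^=\|_{L^2(\partial B_1)}^{2+\gamma}\le C\big(\GG_{2m}(q)-\GG_{2m}(q-\phi)\big)$ cannot hold: take $c=c^=$ equal to the restriction of an admissible $2m$-homogeneous harmonic polynomial (e.g.\ the $h_{2m}$ of Remark~\ref{rmk:h2m}), so that $q\ge 0$ on $B_1'$, no correction is needed, the right-hand side vanishes, yet the left-hand side is positive. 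More fundamentally, the null mode contributes \emph{zero} to $\GG_{2m}(z)$, so no gain needs to be extracted from it; when $\GG_{2m}(z)>0$ the entire gain must come from the high modes $c^>$, and the actual difficulty is to control the \emph{cost} of whatever correction restores admissibility after those high modes are pushed to homogeneity $\alpha>2m$. Note also that the negativity one must correct is that of the whole low-frequency part $P=c^<+c^=$ on the equator (the individual mode $c^=$ need not be nonnegative there, its sign being compensated by $c^>$), and a cutoff/harmonic-extension correction destroys the exact mode-by-mode energy computation your argument relies on.

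The paper's mechanism is: leave all modes with $\alpha_j\le 2m$ at homogeneity $2m$, raise only the high modes to homogeneity $\alpha$ with $\kappa_{\alpha,2m}=\eps\|\nabla_\theta\phi\|_{L^2(\partial B_1)}^{2\gamma}$, and restore admissibility by adding $M\,r^{2m}h_{2m}-M\,r^{\alpha}h_{2m}$, where $M$ is the sup of the negative part of $P$ on the equator and $h_{2m}\equiv 1$ there; this keeps the competitor a finite sum of homogeneous eigenmodes, so Lemma~\ref{l:thin_fourier} gives exactly $\GG_{2m}(h)-(1-\kappa)\GG_{2m}(z)\le C_1\kappa^2M^2-C_2\kappa\|\nabla_\theta\phi\|^2$. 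The step you are missing is the bound $M^2\le C_3\|\nabla_\theta\phi\|_{L^2(\partial B_1)}^{2(1-\gamma)}$, which couples the admissibility defect of the low modes to the energy of the \emph{high} modes: since $P+\phi\ge 0$ on $\mathbb S^{d-2}$ and $P$ is Lipschitz with controlled constant, $\int_{\mathbb S^{d-2}}\phi^2\ge\int_{\mathbb S^{d-2}}P_-^2\gtrsim M^{d}$, and the trace inequality converts the left side into $\|\nabla_\theta\phi\|_{L^2(\partial B_1)}^2$. It is this coupling that forces $\kappa\sim\|\nabla_\theta\phi\|^{2\gamma}$ rather than a constant (otherwise the cost $C_1\kappa^2M^2$ would not be absorbed), and the final step $\GG_{2m}(z)\le\|\nabla_\theta\phi\|^2$ converts $\kappa$ into $\eps|\GG_{2m}(z)|^{\gamma}$. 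Without this bound and this choice of $\kappa$, your argument does not close.
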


It is not difficult to see that with our method the power $0<\gamma<1$ in \eqref{e:epi:2m} cannot be avoided, see for instance \cite[Example 1]{cospve-classicobst}. This is essentially due to the possible convergence of polynomial of fixed degree $2m$ with low symmetry to ones with higher symmetry.  

\subsubsection{Complete analysis of the free boundary points in dimension two}
In dimension $d=2$, it is known that the only admissible values of the frequency at points of the free boundary are $\sfrac32, 2m$ and $2m-\frac12$, for $m\in \N$. Theorem \ref{t:epi:3/2} and Theorem \ref{t:epi:2m} already provide the classical epiperimetric inequalty for the points $\sfrac32$ and $2m$; indeed, in the case $d=2$, we have $\gamma=0$ in \eqref{e:epi:2m}. We complete the analysis in dimension two by proving an epiperimetric inequality also at the points of density $2m-\frac12$. Before we state the theorem, we recall that in this case the admissible blow up is (up to a constant and a change of orientation) of the form
$$h_{2m-\sfrac 12}(r,\theta)=r^{\frac{4m-1}{2}}\sin\left(\frac{1-4m}{2}\theta\right).$$

\begin{theo}[Epiperimetric inequality for points of frequency $2m-\sfrac12$ in dimension two]\label{t:epi:2m-1/2}
	Let $d=2$ and  $m \in \N$. There exist constants $\delta>0$ and $\kappa>0$ such that the following claim holds. For every function $c\in H^1(\partial B_1)$ such that its $2m-\sfrac12$ homogeneous extension $z\in \mathcal A$  and satisfying
	\begin{equation}
	\label{hp:vicino}
	\|c-h_{2m-\sfrac 12}\|_{L^2(\partial B_1)}\le \delta\,,
	\end{equation}
	there exists $h\in \mathcal A$ such that $h|_{\de B_1}=c$ and 
	\begin{equation}\label{e:epi:2m-1/2}
	\GG_{2m-\sfrac12}(h) \le (1-\kappa)\GG_{2m-\sfrac12}(z).
	\end{equation}
\end{theo}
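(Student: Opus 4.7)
The plan is to adapt the direct Fourier-extension strategy of Theorem~\ref{t:epi:3/2} to the ``other'' case, where the blow-up $h_{2m-\sfrac12}$ is only a critical point of $\GG_\lambda$ modulo the admissibility constraint at the tip of its coincidence half-line. The $L^2$-closeness hypothesis \eqref{hp:vicino} linearizes the argument around $h_{2m-\sfrac12}$, replacing the global positivity of $\GG_\lambda$ available in the settings of Theorems~\ref{t:epi:3/2} and \ref{t:epi:2m}.

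Setting $\lambda := 2m-\sfrac12$, $\Sigma := \{x_1\geq 0,\,x_2 = 0\}\cap B_1$, and $h_\lambda := h_{2m-\sfrac12}$, one checks that $h_\lambda$ is harmonic on $B_1\setminus\Sigma$ with jump $[\partial_{x_2}h_\lambda] = -2\lambda\,r^{\lambda-1}$ across $\Sigma$. Green's identity together with $\GG_\lambda(h_\lambda) = 0$ yields the fundamental decomposition
\begin{equation*}
\GG_\lambda(h_\lambda + e) = 4\lambda\int_0^1 e(r,0)\,r^{\lambda-1}\,dr + \GG_\lambda(e),
\end{equation*}
valid for any $e\in H^1(B_1)$ symmetric in $x_2$ with boundary trace $\eta:=e|_{\partial B_1} = c - h_\lambda|_{\partial B_1}$. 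On the upper half-circle, expand $\eta$ in the basis $\phi_k(\theta) := \sin((k+\sfrac12)\theta)$ of eigenfunctions of $-\partial_\theta^2$ with Dirichlet at $\theta=0$ and Neumann at $\theta=\pi$ (matching the boundary behavior of $h_\lambda$), so that $\eta = \sum_{k\geq 0}c_k\phi_k$ with $\sum c_k^2 \leq C\delta^2$ by \eqref{hp:vicino}, and $c_\lambda \propto \phi_{2m-1}$.

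Define the competitor $h := h_\lambda + e$ via the modewise extension
\begin{equation*}
e(r,\theta) := \sum_{k\geq 0}c_k\,r^{\alpha_k}\phi_k(\theta),\qquad \alpha_k := \max\bigl(k+\sfrac12,\ \lambda\bigr).
\end{equation*}
Since $\phi_k(0)=0$ for every $k$, the slit trace $e(\cdot,0)$ vanishes identically, so the linear term in the decomposition above drops out and $\GG_\lambda(h) = \GG_\lambda(e)$. A per-mode computation gives a gain $c_k^2(k+\sfrac12-\lambda)^2/(2\lambda)$ for every high mode $k\geq 2m$ (where $\alpha_k = k+\sfrac12$ is the harmonic choice) compared to the $\lambda$-homogeneous extension used in $z$, and the uniform spectral gap $|k+\sfrac12-\lambda|\geq 1$ for $k\neq 2m-1$ yields $\GG_\lambda(h) \leq (1-\kappa)\GG_\lambda(z)$. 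Admissibility on $\Sigma$ is automatic from $e|_\Sigma\equiv 0$ and $h_\lambda|_\Sigma\equiv 0$, and on the complementary half-ray $\{x_1<0\}\cap B_1'$ it follows from $h_\lambda\geq C|x_1|^\lambda$ and the $L^\infty$-smallness of the perturbation controlled by $\delta$.

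\textbf{Main obstacle.} The modewise construction above implicitly assumes $\eta(0) = 0$: since $\sum c_k\phi_k(0) = 0$, if $c(0)>0$ the Fourier series in $\{\phi_k\}$ fails to capture this boundary value and the resulting $e$ does not match $\eta$ pointwise at $\theta=0$. To remedy this one splits $\eta = \eta(0)\chi + \eta_0$, where $\chi$ is a fixed bump on $\partial B_1$ equal to $1$ at $\theta=0$, extended into $B_1$ as a specific admissible $\lambda$-homogeneous function vanishing on $\Sigma$, while $\eta_0(0)=0$ is treated by the modewise argument. The extra contribution to $\GG_\lambda(h)$ is linear in $\eta(0) = c(0)\geq 0$ and, by admissibility, has the favourable sign; its quantitative interplay with the linear term $2c(0)$ appearing in the expansion of $\GG_\lambda(z)$ is the most delicate part of the argument, closing \eqref{e:epi:2m-1/2} upon a final small choice of $\delta$.
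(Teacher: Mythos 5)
Your strategy is genuinely different from the paper's: you work with a Green's identity around $h_{2m-\sfrac12}$ and the mixed Dirichlet--Neumann eigenbasis $\sin((k+\sfrac12)\theta)$ of the slit $\Sigma=\{x_1\ge 0,\ x_2=0\}$, whereas the paper decomposes $h_{2m-\sfrac12}$ into its restrictions $f_j$ to the $4m-1$ nodal sectors, writes $c=\sum_j a_jf_j+\tilde c$ with $\tilde c$ orthogonal to $1,\cos\theta,\dots,\cos((2m-1)\theta)$, keeps $\sum_j a_jf_j$ at homogeneity $2m-\sfrac12$ and raises only $\tilde c$ to a single homogeneity $\alpha$; the boundary terms on the rays $\{\theta=s_j\}$ then cancel exactly against their counterparts in $(1-\kappa)\GG_{2m-\sfrac12}(z)$ because $\frac{2}{\alpha+2m-\sfrac12}=(1-\kappa_{\alpha,2m-\sfrac12})\frac{2}{4m-1}$, and Lemma~\ref{l:thin_fourier} finishes. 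Your per-mode gain $c_k^2(k+\sfrac12-\lambda)^2/(2\lambda)$ and the spectral-gap count are correct as far as they go, but the ``main obstacle'' you flag is a genuine gap, and the remedy you sketch does not close it.

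Concretely: if $c(0)>0$ there is \emph{no} finite-energy competitor with $e\equiv 0$ on $\Sigma$ and trace $\eta$ on $\partial B_1$, because the combined boundary datum of the slit domain then has a jump discontinuity at the corner $(1,0)$, which is not in $H^{\sfrac12}$ of the boundary curve (any such $e$ must interpolate angularly near the corner and its Dirichlet energy diverges logarithmically). So the linear term $4\lambda\int_0^1 e(r,0)\,r^{\lambda-1}\,dr$ cannot be made to ``drop out'', and correspondingly the modewise series for $e$ diverges in $H^1(B_1)$ whenever $\eta(0)\neq 0$. The proposed fix is inconsistent as stated: a $\lambda$-homogeneous function $r^\lambda\tilde\chi(\theta)$ in $H^1(B_1)$ with $d=2$ has a continuous trace on $\partial B_1$, so vanishing on $\Sigma$ forces $\tilde\chi(0)=0$, contradicting $\chi(0)=1$. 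If instead its extension does not vanish on $\Sigma$, then (by admissibility) the resulting linear contribution to $\GG_\lambda(h)$ is \emph{nonnegative}, i.e.\ it carries the unfavourable sign for an upper bound on $\GG_\lambda(h)$; what saves the inequality is the matching term $2c(0)$ inside $\GG_\lambda(z)$, against which one must gain a full factor $(1-\kappa)$ by extending the corner profile with a homogeneity $\alpha>\lambda$ --- precisely the quantitative step you leave open, and precisely what the paper's algebraic cancellation at the ray $\theta=0$ accomplishes. A secondary issue: with mode-dependent homogeneities $\alpha_k$, admissibility of $h$ on $\{x_1<0\}\cap B_1'$ requires $\sum_k|c_k|\le 1$, which does not follow from the $L^2$ hypothesis \eqref{hp:vicino}; the paper avoids this by using one homogeneity for the whole remainder $\tilde c$.
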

\noindent In dimension $d=2$, the regularity of the free boundary (namely, the fact that they are isolated in the line) can be obtained also with softer arguments than our epiperimetric inequality; however, the previous result allows for instance to show the $C^{1,\alpha}$ decay of $u$ on the unique blow up at each free boundary point and also provides an alternative, self-contained approach.

\subsubsection{Application of the epiperimetric inequalities I: homogeneous minimizers and admissible frequencies} A very important and not yet well-understood question in the contest of the thin-obstacle problem is the study of the admissible frequencies at free-boundary points. Indeed nothing is known, except for the gap between $\sfrac32$ and $2$ (see \cite{AtCaSa}) and the recent result of Focardi and Spadaro \cite{FoSp-GMT}, where they establish that the collection of free-boundary points with frequency different than $\sfrac32$, $2m$ and $2m-\sfrac12$, is a set of $\HH^{d-2}$ measure zero. It is conjectured that these are the only admissible frequencies, but not even the gap between $2$ and the subsequent admissible frequency was known. Indeed, thanks to Theorem \ref{t:freq} below, we are able to recover the gap $\sfrac32-2$ and to prove the new result that \emph{the frequencies $2m$ are isolated for every $m\in \N$}, where the gap is given by explicit constants.

We say that $\lambda\in\R$ is an {\it admissible frequency} if there is a solution $u\in H^1(B_1)$ of the thin-obstacle problem and a point $x_0\in\Gamma(u)$ such that $N^{x_0}(0)=\lambda$. For a minimizer $u$ and an admissible frequency $\lambda=N^{x_0}(0)$, the monotonicity of the frequency function implies that, up to a subsequence, $\|u_{r,x_0}\|_{L^2(\partial B_1)}^{-1}{u_{r,x_0}}$ converges, as $r\to 0$, weakly in $H^1(B_1)$ and strongly in $L^2(B_1)\cap L^2(\partial B_1)$ to a $\lambda$-homogeneous global solution $p:\R^d\to\R$ such that $\|p\|_{L^2(\partial B_1)}=1$. In particular, if we denote by 
$$
\KK_{\lambda} := \{u\in H^1(B_1): \mbox{$u$ is a nonzero $\lambda$-homogeneous minimizer of $\mathcal E$ and $u \geq 0 $ on $B_1'$}\}
$$
we have that 
\begin{align}\label{e:bu}
\mbox{if $\lambda$ is an admissible frequency, then $\KK_{\lambda} \neq \emptyset$}.
\end{align}

A complete description of the spaces $\KK_\lambda$ and the admissible frequencies is known only in dimension two, where the only possible values of $\lambda$ are $\sfrac32,\, 2m,$ and $2m-\sfrac12$ for $m\in \N_+$.  However, as a consequence of our logarithmic epiperimetric inequality we can describe the set $\KK_\lambda$ for values of $\lambda$ close to $2m$.

\begin{theo}[$\lambda$-homogeneous minimizers]\label{t:freq}
Let $d \geq 2$. Then for every $m\in \N$ there exist constants $c_m^\pm>0$, depending only on $d$ and $m$, such that 
\begin{equation}
\label{ts:gap}
\KK_{\lambda} = \emptyset \qquad 
\mbox{for every }\lambda \in (\sfrac32,2)\cup \bigcup_{m\in \N} \big((2m-c_m^-,2m) \cup(2m,2m+c_m^+)\big).
\end{equation}
Moreover, setting
\begin{equation}
\label{eqn:he}
h_e(x):=\left(2( x'\cdot e)-\sqrt{( x'\cdot e)^2+x_d^2}\right)\sqrt{\sqrt{( x'\cdot e)^2+x_d^2}+ x\cdot e} = {\rm Re}( x'\cdot e + i |x_d|)^{\sfrac32}\ ,
\end{equation}
we have
\begin{equation}\label{e:class:3/2}
\KK_{\sfrac32}=\{C\, h_e\,:\,e\in \mathbb S^{d-1}\quad\mbox{and}\quad C>0\}\,,\end{equation}
\begin{equation}\label{e:class:2m}
\begin{array}{ll}
\KK_{2m}=\{C\, p_{2m}\,:\,p_{2m}&\text{is a $2m$-homogeneous harmonic polynomial},\\
& \quad p_{2m}\ge 0\text{ on }B_1',\quad \|p_{2m}\|_{L^2(\partial B_1)}=1\quad\text{and}\quad C>0\}\,.
\end{array}
\end{equation}
\end{theo}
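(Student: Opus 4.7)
The proof reduces to algebra once we exploit the following identity for any $u\in\mathcal K_\lambda$ with trace $c:=u|_{\partial B_1}$. Since $-\Delta u$ is a nonnegative measure supported on $\{u=0\}$, we have $u\Delta u=0$ distributionally, and Green's identity together with the Euler relation $\partial_\nu u|_{\partial B_1}=\lambda c$ gives $\int_{B_1}|\nabla u|^2=\lambda\|c\|^2_{L^2(\partial B_1)}$, i.e.\ $\GG_\lambda(u)=0$. Matching this with the Dirichlet integral of $r^\lambda c(\theta)$ in spherical coordinates yields the Pohozaev identity $\|\nabla_\theta c\|^2=\lambda(\lambda+d-2)\|c\|^2$, from which one reads off, for every frequency $\mu>0$,
\[
\GG_\mu(u)=(\lambda-\mu)\|c\|^2, \qquad \GG_\mu(z_\mu)=\frac{(\lambda-\mu)(\lambda+\mu+d-2)}{2\mu+d-2}\|c\|^2,
\]
where $z_\mu(r,\theta):=r^\mu c(\theta)$. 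Both formulas are the basic input for what follows.

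For the upper gaps, the plan is to couple these identities with the epiperimetric competitors and the minimality of $u$. For $\lambda>\sfrac32$, Theorem~\ref{t:epi:3/2} provides $v\in\mathcal A$ with $v|_{\partial B_1}=c$ and $\GG_{\sfrac32}(v)\le(1-\tfrac1{2d+3})\GG_{\sfrac32}(z_{\sfrac32})$; minimality gives $\GG_{\sfrac32}(u)\le\GG_{\sfrac32}(v)$, so dividing by $(\lambda-\sfrac32)\|c\|^2>0$ leads to $1\le\frac{2(\lambda+d-\sfrac12)}{2d+3}$, which simplifies to $\lambda\ge 2$ and closes the gap $(\sfrac32,2)$. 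For $\lambda>2m$ close to $2m$, Theorem~\ref{t:epi:2m} analogously gives
\[
1\le\Bigl(1+\tfrac{\lambda-2m}{4m+d-2}\Bigr)\bigl(1-\eps\,|\GG_{2m}(z_{2m})|^{\gamma}\bigr);
\]
setting $\delta:=\lambda-2m$, the right-hand side reads $1+O(\delta)-\eps C\delta^{\gamma}$, so since $\gamma=(d-2)/d<1$ the sublinear negative correction dominates the linear positive one and forces $\delta\ge c_m^+>0$ (with $c_m^+$ explicit; in $d=2$ one has $\gamma=0$ and the contradiction is immediate). The classification \eqref{e:class:2m} then falls out of the Pohozaev identity alone: expanding $c=\sum_{k\ge 0}c_k$ into eigenfunctions of $-\Delta_{S^{d-1}}$ with eigenvalues $k(k+d-2)$, strict monotonicity of $k\mapsto k(k+d-2)$ turns the equality $\sum k(k+d-2)\|c_k\|^2=2m(2m+d-2)\sum\|c_k\|^2$ into $c=c_{2m}$, so $u=r^{2m}c_{2m}$ is a $2m$-homogeneous harmonic polynomial, and conversely every such polynomial, when nonnegative on $B_1'$, is automatically a minimizer because it is harmonic. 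The same eigenvalue argument cannot be applied at $\lambda=\sfrac32$, since $\sfrac32(\sfrac32+d-2)$ is not an integer eigenvalue of $-\Delta_{S^{d-1}}$; for \eqref{e:class:3/2} I would invoke the Athanasopoulos--Caffarelli--Salsa dimensional-reduction classification of regular blow-ups.

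The main obstacle is the lower gap $\lambda\in(2m-c_m^-,2m)$: here both $\GG_{2m}(u)$ and $\GG_{2m}(z_{2m})$ are negative, and Theorem~\ref{t:epi:2m} reduces to the vacuous statement $\GG_{2m}(h)\le\GG_{2m}(z_{2m})(1-\eps|\cdot|^\gamma)$ whose right-hand side is \emph{closer to zero} than $\GG_{2m}(z_{2m})$ itself, so the direct recipe above breaks down. My plan is a compactness-and-classification argument: if the gap failed, a sequence $\lambda_n\uparrow 2m$ with $u_n\in\mathcal K_{\lambda_n}$, normalized $\|u_n\|_{L^2(\partial B_1)}=1$, would extract (via standard $H^1_{\text{loc}}$ compactness of thin-obstacle minimizers and continuity of the Almgren frequency) to a limit $u_\infty\in\mathcal K_{2m}$, hence to a $2m$-homogeneous harmonic polynomial by \eqref{e:class:2m}. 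The real technical difficulty is to turn this rigidity into a quantitative contradiction with $\lambda_n<2m$, typically by combining the logarithmic epiperimetric inequality with a Monneau-type monotonicity formula to compare $u_n$ with $u_\infty$ at integer-scale resolution.
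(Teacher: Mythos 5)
Your identities $\GG_\mu(u)=(\lambda-\mu)\|c\|^2$ and $\GG_\mu(z_\mu)=\frac{(\lambda-\mu)(\lambda+\mu+d-2)}{2\mu+d-2}\|c\|^2$ are exactly the paper's Lemma~\ref{l:mumut}, and your treatment of the upper gaps $(\sfrac32,2)$ and $(2m,2m+c_m^+)$ coincides with the paper's argument. But two of your steps have genuine gaps. First, the classification of $\KK_{2m}$ does \emph{not} ``fall out of the Pohozaev identity alone'': the identity $\sum_j(\lambda_j-\lambda(2m))c_j^2=0$ is a weighted average and permits cancellation between modes with $\lambda_j<\lambda(2m)$ and modes with $\lambda_j>\lambda(2m)$; strict monotonicity of $k\mapsto k(k+d-2)$ gives you nothing here. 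The paper first kills the high modes using the strengthened form of the logarithmic epiperimetric inequality (Remark~\ref{oss:epi:2m:improved}, which carries the extra term $-\frac{C_2\eps}{2}\|\nabla_\theta\phi\|^{2+2\gamma}$), and only then does the Fourier identity, now a sum of non-positive terms, force the low modes to vanish. Similarly, \eqref{e:class:3/2} is derived in the paper from the equality case of Theorem~\ref{t:epi:3/2} (Remark~\ref{oss:3/2}) rather than imported from \cite{AtCaSa}; invoking the latter defeats the stated purpose of obtaining the classification as a corollary.

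Second, the lower gap $(2m-c_m^-,2m)$ is the real missing piece. You correctly diagnose that Theorem~\ref{t:epi:2m} is useless when $\GG_{2m}(z)<0$, but your compactness-plus-Monneau plan is not carried out and, being by contradiction, would not produce constants $c_m^-$ of the explicit form the theorem advertises. The paper's solution is a separate, direct \emph{epiperimetric inequality for negative energies} (Proposition~\ref{prop:epi:2m-neg}): one re-extends the \emph{low} Fourier modes $c_<$ (corrected by $M h_{2m}$ to preserve admissibility on $B_1'$) with a homogeneity $\alpha\in(2m-1,2m)$ strictly below $2m$, leaving the $2m$- and high modes untouched. Since the low modes have $\lambda_j<\lambda(\alpha)<\lambda(2m)$, this competitor satisfies $\GG_{2m}(h)\le(1+\eps)\GG_{2m}(z)$ with $\GG_{2m}(z)<0$, i.e.\ it strictly \emph{decreases} the energy by a definite factor. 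Combined with your identities this yields $(1+\eps)\bigl(1+\frac{t}{4m+d-2}\bigr)\le 1$ for $t=\lambda-2m<0$, hence $|t|\ge c_m^-=\eps(4m+d-2)/(1+\eps)$. Without this construction (or a worked-out substitute) the statement \eqref{ts:gap} is not proved.
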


\begin{oss} Theorem \ref{t:freq} and \eqref{e:bu} imply that the frequencies $\sfrac32$ and $2m$, for every $m\in \N$, are isolated, and in particular $N^{x_0}\notin (\sfrac32,2)\cup \bigcup_{m\in \N} \big((2m-c^-_m,2m) \cup(2m+c^+_m)\big)$ for every $x_0\in \Gamma(u)$, where $u$ is a minimizer of the obstacle problem for general obstacle $\phi$.	

At difference with respect to other results where gaps of this kind are established, the arguments leading to the constants $c_m$ are never by contradiction, hence the constants $c_m$ can be tracked in the proofs (see Remark~\ref{rmk:explicit} for an explicit example).
\end{oss}

We wish to stress that the classes $\KK_{2m}$ and $\KK_{\sfrac32}$ were already characterized (see \cite{AtCaSa,GaPe}) and that typically this characterization is needed to prove an epiperimetric inequality. However our epiperimetric inequalities are a property of the energies $\GG_{\lambda}$, and not of a class of blow-ups, and as such allow us to characterize the $\KK_{\lambda}$ as a corollary.

\begin{oss}
	Finally we notice that \eqref{eqn:GP-dens0} follows immediately from Theorem \ref{t:freq}, the classification, thus giving an alternative proof to the one of \cite{GaPe}.
\end{oss}

\subsubsection{Application of the epiperimetric inequalities II: regularity of the free boundary in any dimension}

Using the epiperimetric inequalities Theorem \ref{t:epi:3/2} and Theorem \ref{t:epi:2m} we prove the following regularity result, valid in any dimension.
\begin{theo}[Regularity of the Regular and Singular set]\label{t:main}
	Let $u\in \mathcal{A}_w$ be a minimizer of the thin-obstacle energy $\mathcal E$.
	\begin{itemize}
		\item[(i)] There exists a dimensional constant $\alpha>0$ such that $\reg(u)$ is in $B_1'$ a $C^{1,\alpha}$ regular open submanifold of dimension $(d-2)$.
		\item[(ii)] For every $m\in \N$ and $k=0,\dots, d-2$, $S_k^{2m}(u)$ is contained in the union of countably many submanifolds of dimension $k$ and class $C^{1,log}$. In particular $\sing(u)$ is contained in the union of countably many submanifolds of dimension $(d-2)$ and class $C^{1,log}$.
	\end{itemize}	
\end{theo}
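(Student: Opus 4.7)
The plan is to follow the now classical pipeline
\[
\text{epiperimetric inequality}\ \Longrightarrow\ \text{decay of Weiss energy}\ \Longrightarrow\ \text{unique blowup with modulus}\ \Longrightarrow\ \text{regularity of }\Gamma(u),
\]
separately at regular points (using Theorem~\ref{t:epi:3/2}) and at $2m$-singular points (using Theorem~\ref{t:epi:2m}). The first step combines the epiperimetric inequality with a standard differential identity relating $\tfrac{d}{dr}\GG_\lambda^{x_0}(r,u)$ to the difference $\GG_\lambda^{x_0}(r,z_r)-\GG_\lambda^{x_0}(r,u_{r,x_0})$, where $z_r$ denotes the $\lambda$-homogeneous extension of $u_{r,x_0}|_{\partial B_1}$. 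Since $u_{r,x_0}$ is a minimizer, the right-hand side of this identity is controlled from below by the epiperimetric gain; the resulting ODE differential inequality for $r\mapsto \GG_\lambda^{x_0}(r,u)$ integrates to
\[
\GG_{\sfrac32}^{x_0}(r,u)\le Cr^{2\alpha}\ \text{ with }\alpha=\alpha(d)>0,\qquad \GG_{2m}^{x_0}(r,u)\le C(-\log r)^{-1/\gamma},\ \gamma=\tfrac{d-2}{d},
\]
respectively at regular and at singular points. The nonnegativity of $\GG_{2m}^{x_0}(\cdot,u)$ needed to run the Gr\"onwall step at singular points follows from the monotonicity of the Weiss energy together with the vanishing of $\GG_{2m}$ on any subsequential blowup, which is in $\KK_{2m}$ by Theorem~\ref{t:freq}.

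\textbf{From energy decay to unique blowups, and part (i).} A second, classical identity bounds the $L^2(\partial B_1)$-variation of the rescaled function $\rho\mapsto u_{\rho,x_0}$ by the square root of $\GG_\lambda^{x_0}(\rho,u)$, as in \cite{Weiss2, GaPeVe, FoSp, SpVe, cospve-classicobst}. Plugging the decay above into a dyadic summation produces the Cauchy estimates
\[
\|u_{r,x_0}-\phi_{x_0}\|_{L^2(\partial B_1)}\le Cr^{1+\alpha}\qquad\text{resp.}\qquad Cr(-\log r)^{-1/(2\gamma)},
\]
and in particular the uniqueness of the blowup $\phi_{x_0}\in\KK_\lambda$ with the stated modulus. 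At regular points, the classification $\KK_{\sfrac32}=\{Ch_e\}$ of Theorem~\ref{t:freq}, combined with the upper semicontinuity of $x\mapsto N^x(0,u)$, yields an $\varepsilon$-regularity statement: every free boundary point close to $x_0\in\reg(u)$ lies again in $\reg(u)$, and the unit vector $e(x)\in\mathbb S^{d-1}$ identifying its blowup depends H\"older continuously on $x$. A standard implicit-function argument based on this H\"older dependence concludes part~(i).

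\textbf{Stratification and part (ii).} At $x_0\in\freq^{2m}(u)$, Theorem~\ref{t:freq} identifies the unique blowup as a $2m$-homogeneous harmonic polynomial $p_{x_0}\ge 0$ on $B_1'$. Stratifying by the dimension $k$ of the linear space of translation invariances of $p_{x_0}$ one obtains the decomposition $\sing(u)=\bigsqcup_{k=0}^{d-2}S_k^{2m}(u)$. A Monneau-type monotonicity formula comparing $u_{r,x_0}$ with $p_{x_0}$, together with the logarithmic decay of $\GG_{2m}^{x_0}$ obtained above, upgrades pointwise uniqueness of the blowup to a joint modulus: for $x_0,x_1\in S_k^{2m}(u)$ with $|x_0-x_1|$ small,
\[
\|p_{x_0}-p_{x_1}\|_{L^2(\partial B_1)}\le C(-\log|x_0-x_1|)^{-1/(2\gamma)}.
\]
A Whitney-type extension theorem, in the spirit of \cite{GaPe, FoSp, cospve-classicobst}, applied to $x\mapsto p_x$ with this $C^{1,\log}$ modulus, combined with the implicit function theorem applied to the components of $p_x$ transverse to the translation-invariance directions at $x_0$, then covers each $S_k^{2m}(u)$ by countably many $C^{1,\log}$ submanifolds of dimension $k$, proving part~(ii).

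\textbf{Main obstacle.} The delicate step is the singular case: the Monneau-type comparison has to be established with the logarithmic rate \emph{uniformly} along each stratum $S_k^{2m}(u)$, and the logarithmic modulus produced by Theorem~\ref{t:epi:2m} must be strong enough for the Whitney extension and the implicit function theorem to deliver a genuine $C^{1,\log}$ manifold. The crucial feature that makes this possible without a separate, contradiction-based flatness step is precisely that our epiperimetric inequality \eqref{e:epi:2m} holds \emph{without any a-priori closeness} to an admissible blowup, so the whole pipeline applies directly at every point of $\freq^{2m}(u)$.
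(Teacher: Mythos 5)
Your pipeline coincides with the paper's: epiperimetric inequality $\Rightarrow$ ODE for $\GG_\lambda^{x_0}(r)$ via the monotonicity identity \eqref{e:monot} $\Rightarrow$ H\"older (resp.\ logarithmic) decay of the Weiss energy $\Rightarrow$ dyadic Cauchy estimate for $r\mapsto u_{x_0,r}$ $\Rightarrow$ Whitney extension plus implicit function theorem. Two remarks on route: for the joint modulus $\|p_{x_0}-p_{x_1}\|$ you invoke a Monneau-type monotonicity formula, whereas the paper avoids Monneau entirely and instead compares $u_{x_1,r}$ with $u_{x_2,r}$ directly through the $C^{1,\sfrac12}$ estimate of Theorem~\ref{t:ACS}, choosing $r=|x_1-x_2|^{\sfrac{1}{4m}}$ and combining with the decay of Proposition~\ref{p:decay}; your variant is viable (it is the Garofalo--Petrosyan scheme) but requires you to actually prove the Monneau monotonicity with a logarithmic error, which is an extra step the direct comparison sidesteps.

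There is one genuine gap: you never establish non-degeneracy of the blow-up at singular points. Your Cauchy estimate shows $u_{x_0,r}\to\phi_{x_0}$ in $L^2(\partial B_1)$, but with the normalization $u_{x_0,r}=r^{-2m}u(x_0+r\,\cdot)$ the limit could a priori be $\phi_{x_0}\equiv 0$ (i.e.\ $H^{x_0}(r)=o(r^{d-1+4m})$); your assertion that $\phi_{x_0}\in\KK_{2m}$, hence nonzero, is exactly what needs proof. Without it the implicit-function step collapses, since you need $d-1-k$ directions along which $\nabla_{x'}\tilde p_{x}\neq 0$. The paper devotes Lemma~\ref{lem:nondeg} to this, via a compactness argument that crucially re-uses the uniform rate \eqref{e:L^2decay} at all nearby centers. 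Two smaller bookkeeping errors: the logarithmic Cauchy estimate cannot carry the prefactor $r$ (the convergence is only logarithmic, $\|u_{x_0,r}-\phi_{x_0}\|\le C(-\log r)^{-\frac{1-\gamma}{2\gamma}}$, with exponent $\frac{1-\gamma}{2\gamma}$ rather than your $\frac{1}{2\gamma}$, because the dyadic summation consumes part of the decay of $\GG_{2m}$); and Theorem~\ref{t:epi:2m} is stated under the normalization \eqref{e:orchecattive}, so one must first check, as in Step~1 of Proposition~\ref{p:decay}, that the rescalings $u_{x_0,r}$ satisfy uniform bounds on compact subsets, at the price of replacing $\eps$ by $\eps\,\Theta^{-\gamma}$.
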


\begin{oss} If we consider minimizers $u\in H^1(B_1^+)$ with Dirichlet boundary conditions of the more general thin-obstacle problem, where we minimize the energy $\mathcal E$ in the class of admissible functions
	$$
	\mathcal A^\phi:=\{ u\in H^1(B_1^+)\,:\,u\geq \phi \mbox{ on }B_1'   \,,\,\,u(x',x_d)=u(x',-x_d) \mbox{ for every }(x',x_d)\in B_1  \}\,,
	$$
	with $\phi\in C^{l,\beta}(B_1',\R+)$, then an analogous statement holds, that is 
	\begin{itemize}
		\item[(i)] there exists a dimensional constant $0<\alpha\leq \beta$ such that $\reg(u)$ is in $B_1'$ a $C^{1,\alpha}$ regular submanifold of dimension $(d-2)$,
		\item[(ii)] for every $2m<l$ and $k=0,\dots, d-2$, $S_k^{2m}(u)$ is contained in the union of countably many submanifolds of dimension $k$ and class $C^{1,log}$.
	\end{itemize}	
	This result can be proved as a standard application of our various epiperimetric inequalities and the almost minimality of the blow-ups at a point of the free-boundary, which follows from the regularity of the obstacle (see for instance \cite{cospve-classicobst}). In particular it provides an improvement in the regularity of $\freq^{2m}$, $2m<l$, from $C^1$ to $C^{1,\log}$ of the results of \cite{GaPe, barriosfiga}.  
\end{oss}


\subsection{Organization of the paper}The paper is organized as follows. After introducing notation and classical results in Section~\ref{sec:prelim}, Sections~\ref{sec:epi-reg}, \ref{sec:epi-sing}, and \ref{sec:epi-2d} are devoted to the proofs of the epiperimetric inequalities of Theorems~\ref{t:epi:3/2}, \ref{t:epi:2m} and \ref{t:epi:2m-1/2}, respectively. 
 Section \ref{sec:freq} contains the proof of Theorem \ref{t:freq}, which is new and follows from our direct approach to the epiperimetric inequality. 
Section~\ref{sec:regFB} is dedicated to the proof of Theorem \ref{t:main} which is based on arguments of classical flavor and which is adapted to the logarithmic case.

%
%


\section{Preliminaries}\label{sec:prelim}

In this section we recall some properties of the solutions of the thin-obstacle problem, the frequency function, the Weiss' boundary adjusted functional and we deal with some preliminary computations. 

\subsection{Regularity of minimizers} The optimal regularity of the solutions of the thin obstacle problem was proved in \cite{atcaffa}. We recall the precise estimate in the following theorem.

\begin{teo}[Optimal regularity of minimizers {\cite{atcaffa}}]\label{t:ACS}	Let $u\in \mathcal A$ be a minimizer of $\mathcal E$ with Dirichlet boundary conditions. Then $u\in C^{1,\sfrac12}(B_{\sfrac12}^+)$ and there exists a dimensional constant $C_d>0$ such that 
	$$
	\|u\|_{C^{1,\sfrac12}(B_{\sfrac12}^+)}\leq C_d\, \|u\|_{L^2(B_1)}\,.
	$$
\end{teo}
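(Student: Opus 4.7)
The plan is to combine a quantitative Lipschitz bound, a qualitative $C^{1,\alpha}$ estimate, and the sharp Almgren frequency lower bound $N^{x_0}(0+,u)\ge \sfrac32$, following the original three-step strategy of Athanasopoulos--Caffarelli and tracking constants throughout.

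First I would establish the Lipschitz estimate $\|\nabla u\|_{L^\infty(B_{3/4})}\le C\|u\|_{L^2(B_1)}$. By minimality, $u$ is harmonic in $B_1\setminus \Delta(u)$ and subharmonic across the thin set $B_1'$; the admissibility condition $u\ge 0$ on $B_1'$ together with the even symmetry in $x_d$ forces the Signorini complementarity condition $u\ge 0$, $\partial_{x_d}u\le 0$, $u\,\partial_{x_d}u=0$ on $B_1'\cap\{x_d=0^+\}$. Comparing $u$ with its harmonic majorant on balls and using mean value / Caccioppoli arguments yields first the $L^\infty$ bound $\|u\|_{L^\infty(B_{7/8})}\le C\|u\|_{L^2(B_1)}$ and then the Lipschitz bound, with the explicit dimensional constant.

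Next I would derive a qualitative $C^{1,\alpha}(B_{1/2}^+)$ estimate for some $\alpha\in(0,\sfrac12]$. For every tangential direction $e\in \R^{d-1}\times\{0\}$, both $(\partial_e u)_+$ and $(\partial_e u)_-$ are subharmonic in $B_1$ (as follows from differentiating the Signorini condition), so De Giorgi--Nash--Moser gives $\partial_e u\in C^{0,\alpha}(B_{1/2})$. The normal derivative $\partial_{x_d}u$ is harmonic off $\Delta(u)$, odd in $x_d$ and sign-constant on $\Delta(u)$, and together with the control on the tangential derivatives this yields $u\in C^{1,\alpha}(B_{1/2}^+)$.

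The delicate step, which is the main obstacle, is the passage to the sharp exponent $\alpha=\sfrac12$. I would invoke the monotonicity of the Almgren frequency $N^{x_0}(r,u)$ recalled in the introduction and prove the lower bound $N^{x_0}(0+,u)\ge \sfrac32$ at every $x_0\in\Gamma(u)$. Any Almgren blow-up at $x_0$ is a nonzero $\lambda$-homogeneous global solution $p$ of the Signorini problem with $\lambda=N^{x_0}(0+,u)$; expanding the trace $p|_{\partial B_1}$ in spherical harmonics and imposing the Signorini boundary condition forces $\lambda\in\{0,1\}\cup[\sfrac32,+\infty)$, and the cases $\lambda\in\{0,1\}$ are excluded since $x_0\in\Gamma(u)$. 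This gives the pointwise growth
\[
\|u\|_{L^\infty(B_r(x_0))}\le C r^{\sfrac32}\,\|u\|_{L^2(B_1)}\qquad\text{for every }x_0\in\Gamma(u),
\]
by the standard Morrey-type iteration from the frequency bound. Finally, combining this growth estimate at free boundary points with the interior harmonic estimates on $B_1\setminus\Delta(u)$ and the $C^{1,\alpha}$ control from the previous step, and estimating $|\nabla u(x)-\nabla u(y)|$ by distinguishing the cases $|x-y|\lesssim \dist(x,\Gamma(u))$ and $|x-y|\gtrsim \dist(x,\Gamma(u))$, upgrades these ingredients to the uniform $C^{1,\sfrac12}(B_{1/2}^+)$ bound with the stated linear dependence on $\|u\|_{L^2(B_1)}$.
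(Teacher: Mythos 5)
First, a point of comparison: the paper does not prove Theorem~\ref{t:ACS} at all --- it is stated as a quotation of the result of \cite{atcaffa} and used as a black box --- so your proposal can only be measured against the literature. Your three-step architecture (quantitative Lipschitz bound, qualitative $C^{1,\alpha}$ via subharmonicity of $(\partial_e u)_\pm$ for tangential $e$, then the frequency lower bound $N^{x_0}(0^+,u)\ge\sfrac32$ upgraded to the sharp exponent by a Morrey-type iteration and the dichotomy on $|x-y|$ versus $\dist(x,\Gamma(u))$) is the standard modern route and is sound in outline; note that it is genuinely different from the original proof of Athanasopoulos--Caffarelli, which predates the Almgren frequency function for this problem and instead rests on tangential semiconvexity of $u$ and a barrier argument for $\partial_{x_d}u$.

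The genuine gap sits exactly at the step you call delicate. You assert that ``expanding the trace $p|_{\partial B_1}$ in spherical harmonics and imposing the Signorini boundary condition forces $\lambda\in\{0,1\}\cup[\sfrac32,+\infty)$.'' This is only a proof in $d=2$, where the Signorini eigenfunctions on the circle can be written down explicitly. For $d\ge 3$ the trace of a $\lambda$-homogeneous global solution solves a thin-obstacle eigenvalue problem on the sphere whose admissible homogeneities are \emph{not} classified by any Fourier expansion; indeed the set of admissible frequencies is still not fully understood, and Theorem~\ref{t:freq} of the present paper is devoted precisely to extracting partial information about it, with the gap above $\sfrac32$ appearing as a nontrivial consequence of the epiperimetric inequality rather than of a spectral decomposition. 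The actual proof that no homogeneity in $(1,\sfrac32)$ occurs requires a real argument: one first gets $\lambda\ge 1+\alpha>1$ from the $C^{1,\alpha}$ estimate and $\nabla u(x_0)=0$, then studies $v=\partial_e p$ for tangential $e$, which is $(\lambda-1)$-homogeneous with $v^\pm$ subharmonic and disjointly supported, and invokes either the Friedland--Hayman/Alt--Caffarelli--Friedman eigenvalue inequality (when $v$ changes sign, forcing $\lambda\ge 2$) or a reduction to the two-dimensional profile via the structure of the coincidence set of the blow-up (yielding $\lambda=\sfrac32$). Without supplying one of these arguments, your scheme only delivers $C^{1,\alpha}$ for some unspecified $\alpha>0$, not the exponent $\sfrac12$ --- which is the entire content of the theorem.
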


\subsection{Properties of the frequency function} Let $u\in H^1(B_1)$ be a minimizer of the thin-obstacle energy and $x_0\in \Gamma(u)$. Then we introduce the quantities
$$
D^{x_0}(r):=\int_{B_r(x_0)}|\nabla u|^2\,dx,\quad H^{x_0}(r):=\int_{\de B_r(x_0)}u^2\,d\HH^{d-1}
\quad\mbox{and}\quad N^{x_0}(r):=\frac{r\, D^{x_0}(r)}{H^{x_0}(r)}\,,
$$
where $0<r<1-|x_0|$. Furthermore in this notation we have
$$
\GG^{x_0}_{\lambda}(r,u)=\frac1{r^{d-2+2\lambda}}D^{x_0}(r)-\frac{\lambda}{r^{d-1+2\lambda}}H^{x_0}(r):=\GG^{x_0}_{\lambda}(r)\,.
$$

In the following we will need the monotonicity of $N$, which can be found in \cite{AtCaSa}, and of $\GG_\lambda$, which can be found in \cite{FoSp} in the case of frequency $\sfrac32$. For the sake of completeness we give here a proof in the general case.
\begin{lemma}[Properties of the frequency function]\label{lem:freq}
	Let $u\in H^1(B_1)$ be a minimizer of $\mathcal E$ and $x_0\in \Gamma(u)$, then the following properties hold. 
	\begin{itemize}
		\item The functions $N^{x_0}(r)$ and $\GG_\lambda^{x_0}(r)$, for any $\lambda>0$, are monotone nondecreasing and in particular
		\begin{equation}\label{e:monot}
		\frac{d}{dr}\GG_{\lambda}^{x_0}(r)=\frac{(d-2+2\lambda)}{r}\left(\GG_\lambda(z_r)-\GG_\lambda(u_r) \right)+\frac1r\int_{\de B_1} \left( \nabla u_r\cdot \nu-\lambda\,u_r \right)^2\,d\HH^{d-1}\,,
		\end{equation}
		where $\ds u_r(x):=\frac{u(x_0+rx)}{r^\lambda}$ and $\ds z_r(x):=|x|^\lambda\, u_r\left(\sfrac{x}{|x|}\right)$. 
		\item For every $N^{x_0}(0)>\lambda$, the function $\ds \frac{H^{x_0}(r)}{r^{d-1+2\lambda}}$ is monotone nondecreasing and in particular
		\begin{equation}\label{e:der_H}
		\frac{d}{dr}\left(\frac{H^{x_0}(r)}{r^{d-1+2\lambda}} \right)=2\frac{\GG_\lambda^{x_0}(r)}{r}\,.
		\end{equation}
	\end{itemize}
\end{lemma}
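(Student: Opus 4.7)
My plan is to reduce the lemma to two classical differential identities for $H^{x_0}$ and $D^{x_0}$ and then read off all four statements. The identities are
\[
(H^{x_0})'(r)=\tfrac{d-1}{r}H^{x_0}(r)+2D^{x_0}(r)\quad\text{and}\quad (D^{x_0})'(r)=\tfrac{d-2}{r}D^{x_0}(r)+2\int_{\de B_r(x_0)}(\de_\nu u)^2\,d\HH^{d-1}.
\]
The first is obtained by writing $H^{x_0}(r)=r^{d-1}\int_{\de B_1}u(x_0+r\omega)^2\,d\HH^{d-1}(\omega)$, differentiating in $r$, and invoking the divergence theorem; in the thin-obstacle problem $\Delta u$ is a nonpositive measure concentrated on $\Delta(u)\subset\{x_d=0\}$ where $u=0$, so $\int u\,d\Delta u=0$ and the boundary flux equals $D^{x_0}(r)$. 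The second is a Rellich--Pohozaev identity: one tests $\Delta u=0$ against $(x-x_0)\cdot\nabla u$ on each half-ball $B_r^{\pm}(x_0)\setminus\Delta(u)$ separately; the horizontal contributions on $\{x_d=0\}$ cancel because $u$ is even in $x_d$ (so $\de_{x_d}u=0$ off the contact set) and $u\equiv 0$ on the contact set, leaving only the stated spherical term.

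Given these, the monotonicity of $N^{x_0}$ is the classical argument: a direct calculation yields
\[
(\log N^{x_0})'(r)=\tfrac{2}{D^{x_0}(r)H^{x_0}(r)}\Big(H^{x_0}(r)\!\int_{\de B_r(x_0)}(\de_\nu u)^2\,d\HH^{d-1}-(D^{x_0}(r))^2\Big)\ge 0,
\]
by Cauchy--Schwarz applied to the identity $D^{x_0}(r)=\int_{\de B_r(x_0)}u\,\de_\nu u$. For the Weiss functional I would substitute the two identities into the derivative of $\GG_\lambda^{x_0}(r)=r^{-(d-2+2\lambda)}D^{x_0}(r)-\lambda r^{-(d-1+2\lambda)}H^{x_0}(r)$, rescale via $u_r(x)=r^{-\lambda}u(x_0+rx)$, and recognize a perfect square to obtain the preliminary form
\[
\tfrac{d}{dr}\GG_\lambda^{x_0}(r)=\tfrac{2}{r}\int_{\de B_1}(\nabla u_r\cdot\nu-\lambda u_r)^2\,d\HH^{d-1}.
\]

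To recast this as the two-term decomposition \eqref{e:monot}, I would prove the algebraic identity
\[
(d-2+2\lambda)\bigl(\GG_\lambda(z_r)-\GG_\lambda(u_r)\bigr)=\int_{\de B_1}(\nabla u_r\cdot\nu-\lambda u_r)^2\,d\HH^{d-1},
\]
which reallocates half of the $2/r$ coefficient into the competitor term. Its proof is short: $\int_{B_1}|\nabla z_r|^2$ is computed in polar coordinates from the $\lambda$-homogeneity of $z_r$, and Pohozaev applied to $u_r$ on $B_1$ yields $\int_{\de B_1}|\nabla_\tau u_r|^2-\int_{\de B_1}(\de_\nu u_r)^2=(d-2)\int_{B_1}|\nabla u_r|^2$; the boundary term $\lambda\int_{\de B_1}u_r^2$ inside $\GG_\lambda$ cancels because $z_r$ and $u_r$ have the same trace on $\de B_1$. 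Both summands in \eqref{e:monot} are then manifestly nonnegative: the second as a square, and the first because $u_r$ is a minimizer and $z_r$ is an admissible competitor sharing the same trace.

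For the second bullet I would just differentiate
\[
\tfrac{d}{dr}\Big(\tfrac{H^{x_0}(r)}{r^{d-1+2\lambda}}\Big)=\tfrac{1}{r^{d-1+2\lambda}}\bigl((H^{x_0})'(r)-\tfrac{d-1+2\lambda}{r}H^{x_0}(r)\bigr)=\tfrac{2}{r}\GG_\lambda^{x_0}(r),
\]
by substituting the identity for $(H^{x_0})'$ and recognizing $\GG_\lambda^{x_0}$. Monotonicity under the assumption $N^{x_0}(0)>\lambda$ then follows from the first bullet: $N^{x_0}(r)\ge N^{x_0}(0)>\lambda$ for every $r>0$, i.e.\ $rD^{x_0}(r)\ge\lambda H^{x_0}(r)$, i.e.\ $\GG_\lambda^{x_0}(r)\ge 0$. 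The main technical point throughout is rigorously justifying the integration-by-parts steps in the presence of the contact set; this reduces to the two structural facts $u|_{\Delta(u)}=0$ and the evenness of $u$ across $\{x_d=0\}$, which together annihilate every horizontal or interior flux contribution and let the classical identities go through cleanly.
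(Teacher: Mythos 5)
Your proposal is correct and rests on exactly the same ingredients as the paper's proof: the differentiation identities for $H^{x_0}$ and $D^{x_0}$ (the latter being the Rellich--Pohozaev identity, justified via $u=0$ on the contact set and evenness in $x_d$), the relation $D^{x_0}(r)=\int_{\de B_r(x_0)}u\,\de_\nu u$, and the resulting algebra for $\GG_\lambda'$ and for $\frac{d}{dr}\big(H^{x_0}(r)/r^{d-1+2\lambda}\big)$. The only cosmetic difference is that you first obtain the single perfect-square form of $\GG_\lambda'$ and then redistribute it into the two-term decomposition \eqref{e:monot} via the (correct) identity $(d-2+2\lambda)\big(\GG_\lambda(z_r)-\GG_\lambda(u_r)\big)=\int_{\de B_1}(\nabla u_r\cdot\nu-\lambda u_r)^2\,d\HH^{d-1}$, whereas the paper groups the terms into that decomposition directly; you also supply the classical Cauchy--Schwarz proof of the monotonicity of $N^{x_0}$, which the paper only cites.
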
 

\begin{proof}
	For the monotonicity of $\GG_{\lambda}$, dropping the index $x_0$, we recall the identities 
	\begin{gather}
	D'(r)=(d-2)\,\frac{D(r)}{r}+2\,\int_{\de B_r} (\de_\nu u)^2\,d\HH^{d-1} \label{e:D'}\\
	H'(r)=(d-1)\,\frac{H(r)}{r}+2\,\int_{\de B_r} u\,\de_\nu u\,d\HH^{d-1} \label{e:H'}\\
	D(r)=\int_{\de B_r} u\,\de_\nu u\,d\HH^{d-1}\notag\,.\label{e:D}
	\end{gather}
	Then, similarly to \cite{FoSp}, we compute
	\begin{align}\label{e:mon_1}
	\GG_{\lambda}'(r)	
		&=\frac{D'(r)}{r^{d-2+2\lambda}}-(d-2+2\lambda)\,\frac{D(r)}{r^{d-1+2\lambda}}-\lambda\,\frac{H'(r)}{r^{d-1+2\lambda}}+\lambda\,(d-1+2\lambda)\,\frac{H(r)}{r^{d-1+2\lambda}}\notag\\
		&\stackrel{\eqref{e:H'}}{=}-\frac{(d-2+2\lambda)}{r}\,\GG_{\lambda}(r) -\lambda\,(d-2+2\lambda)\,\frac{H(r)}{r^{d+2\lambda}} +\underbrace{\frac{D'(r)}{r^{d-2+2\lambda}}+2\lambda^2\,\frac{H(r)}{r^{d+2\lambda}}-2\lambda\,\frac{D(r)}{r^{d-1+2\lambda}}}_{=:I(r)}\,.
	\end{align}
	Next a simple computation shows that
	\begin{align*}
	I(r)
		&=\frac1r\int_{\de B_1} \left(|\nabla u_r|^2-2\lambda\,u_r\,\de_\nu u_r+2\lambda^2\,u_r^2\right)\,d\HH^{d-1}\\
		&=\frac1r\int_{\de B_1} \left[\left(|\de_\nu u_r|-\lambda\,u_r\right)^2+|\nabla_\theta u_r|^2+\lambda^2\,u_r^2\right]\,d\HH^{d-1}\\
		&=\frac1r\int_{\de B_1} \left(|\de_\nu u_r|-\lambda\,u_r\right)^2\,d\HH^{d-1}+(d-2+2\lambda)\,\int_{B_1}|\nabla z_r|^2
	\end{align*}
	which, together with \eqref{e:mon_1}, implies
	$$
	\GG'_{\lambda}(r)=\frac{(d-2+2\lambda)}{r}\left(\GG_\lambda(z_r)-\GG_\lambda(u_r) \right)+\frac1r\int_{\de B_1} \left( \nabla u_r\cdot \nu-\lambda\,u_r \right)^2\,d\HH^{d-1}\,.
	$$
	In particular, if $u$ minimizes $\mathcal E$, then the monotonicity of $\GG_\lambda$ follows.	
	
	For the second bullet, we can compute
	\begin{align*}
	\frac{d}{dr}\left(\frac{H(r)}{r^{d-1+2\lambda}} \right)
		&=\frac{H'(r)}{r^{d-1+2\lambda}}-	(d-1+2\lambda)\,\frac{H(r)}{r^{d+2\lambda}}\\
		&\stackrel{\eqref{e:H'}}{=}(d-1)\frac{H(r)}{r^{d-2+2\lambda}}+\frac{2}{r^{d-1+2\lambda}}\,\int_{\de B_r} u\,\de_\nu u\,d\HH^{d-1}-(d-1+2\lambda)\,\frac{H(r)}{r^{d+2\lambda}}\\
		&\stackrel{\eqref{e:D}}{=}2\,\frac{D(r)}{r^{d-1+2\lambda}}-(2\lambda)\,\frac{H(r)}{r^{d+2\lambda}}=\frac{2}{r}\GG_\lambda(r)\,.
	\end{align*}
	Notice that $\ds \GG_{\lambda}(r)=\frac{H(r)}{r^{d-1+2\lambda}}(N(r)-\lambda)$, so that if $N(0)>\lambda$, then $\GG_\lambda(r)$ is positive, by monotonicity of $N(r)$, and the claim follows. 
\end{proof}

\subsection{Blow-up sequences, blow-up limits and admissible frequencies}
Given a function $u\in H^1(B_1)$ minimizing the energy $\mathcal E$ and a point $x_0\in \mathcal{S}^\lambda$, we define the \emph{blow-up sequence of $u$ at $x_0$} by  $u_{x_0,r}(x):=\frac{u(x_0+rx)}{r^\lambda}$. Using the monotonicity of $N^{x_0}$ and $H^{x_0}$ it is easy to see that
$$
\int_{B_1}|\nabla u_{x_0,r}|^2\,dx= \frac{1}{r^{d-2+2\lambda}}\int_{B_r(x_0)}|\nabla u|^2\,dx= N^{x_0}(r)\, \frac{H^{x_0}(r)}{r^{d-1+2\lambda}}\leq N^{x_0}(1)\, H^{x_0}(1)\,.
$$ 
It follows that there exists a subsequence $(u_{x_0,r_k})_k$ and a function $u_{x_0}$, which depends on the subsequence, such that $u_{x_0,r_k}$ converges weakly in $H^1(B_1)$ and strongly in $L^2(B_1)\cap L^{2}(\de B_1)$ to some function $p_{x_0}\in H^1(B_1)$. Furthermore by Theorem \ref{t:ACS} we have that the convergence is in $C^{1,\alpha}_{loc}(B_1)$, for every $\alpha<\sfrac12$, and by the minimality of $u$, it is also strong in $H^1(B_1)$. A standard argument using the monotonicity of $\GG_{\lambda}^{x_0}$ then shows that $p_{x_0}$ is a $\lambda$-homogeneous global minimizer of $\mathcal E$ such that $p_{x_0}(x',0)\geq0$. We say that $p_{x_0}$ is a blow-up limit at $x_0$ and we denote by $\KK^{x_0}(u)$ the set of all possible blow-up limits at $x_0$.


\subsection{Fourier expansion of the Weiss' energy}\label{sub:fourier}
On the $(d-1)$-dimensional sphere $\partial B_1\subset \R^d$ we consider the Laplace-Beltrami operator $\Delta_{\partial B_1}$. Recall that the spectrum of $\Delta_{\partial B_1}$ is discrete and is given by the decreasing sequence of eigenvalues (counted with the multiplicity)
$$0= \lambda_1\le \lambda_2\le \dots\le \lambda_k\le \dots$$
The corresponding normalized eigenfunctions $\phi_k:\partial B_1\to \R$ are the solutions of the PDEs
$$-\Delta_{\partial B_1}\phi_k=\lambda_k\phi_k\quad\text{on}\quad \partial B_1,\qquad\int_{\partial B_1}\phi_k^2\,d\HH^{d-1}=1.$$
For every $\mu\in\R$ we will use the notation 
\begin{equation}
\label{defn:lambda}
\lambda(\mu)=\mu(\mu+d-2),
\end{equation}
and we will denote by $\alpha_k$ the unique positive real number such that 
$\lambda(\alpha_k)=\lambda_k.$
It is easy to check that the homogeneous function $u_k(r,\theta)=r^{\alpha_k}\phi_k(\theta)$ is harmonic in $\R^d$ if and only if its trace $\phi_k$ is an eigenfunction on the sphere corresponding to the eigenvalue $\lambda_k$. Moreover, it is well known that in any dimension the homogeneities $\alpha_k$ are natural numbers and the functions $u_k$ are harmonic polynomials of homogeneity $\alpha_k$. 
Furthermore for every $\lambda\geq0$ eigenvalue of the Laplace-Beltrami operator on the sphere, we define 
$$ E(\lambda):=\left\{\phi \in H^1(\de B_1)\,:\,-\Delta_{\de B_1}\phi=\lambda\, \phi\quad\mbox{and}\quad\|\phi\|_{L^2(\de B_1)}\neq0\,\right\}\,,$$
that is $E(\lambda)$ is the eigenspace of $\Delta_{\de B_1}$ associated to the eigenvalue $\lambda$ intersected with the unit sphere. We write the energy of a homogeneous function in terms of its Fourier coefficients; a similar lemma can be found in \cite[Lemma 2.1]{cospve-classicobst}, but we report the short proof for completeness.
\begin{lemma}\label{l:thin_fourier}
Let $d \geq 2$, $\alpha, \mu>0$ and 
\begin{equation}\label{eps_choice}
	\kappa_{\alpha,\mu}:=\frac{\alpha-\mu}{\alpha+\mu+d-2}\,.
	\end{equation}
With the notations above, let $\psi=\sum_{j=1}^\infty c_j\phi_j\in H^1(\partial B_1)$, let $\varphi_\alpha (r,\theta):=r^\alpha\psi(\theta)$ the $\alpha$-homogeneous extension of $\psi$ in $B_1$.
Then we have
\begin{equation}\label{e:W_m_vero}
	\GG_{\mu} (\varphi_\mu)= \frac1{2\mu+d-2}\sum_{j=1}^\infty \left(\lambda_j-\lambda(\mu)\right)c_j^2\,,
	\end{equation} 
	\begin{equation}\label{e:W_m}
	\GG_\mu(\varphi_\alpha)-(1-\kappa_{\alpha,\mu})\GG_\mu(\varphi_\mu)= \frac{\kappa_{\alpha,\mu}}{d+2\alpha-2}\sum_{j=1}^\infty (-\lambda_j+\lambda(\alpha)) c_j^2\,.
	\end{equation} 
\end{lemma}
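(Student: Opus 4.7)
The plan is a direct computation in spherical coordinates followed by expansion in the orthonormal basis $\{\phi_j\}$. First I would write
$$|\nabla \varphi_\alpha|^2 = \alpha^2 r^{2\alpha-2}\psi^2(\theta) + r^{2\alpha-2}|\nabla_\theta \psi|^2(\theta),$$
and integrate in polar coordinates, where the radial integral produces the factor $(2\alpha+d-2)^{-1}$:
$$\int_{B_1}|\nabla \varphi_\alpha|^2\,dx = \frac{1}{2\alpha+d-2}\int_{\de B_1}\Big(\alpha^2 \psi^2 + |\nabla_\theta \psi|^2\Big)\,d\HH^{d-1}.$$
Since $\varphi_\alpha|_{\de B_1} = \psi$, I also have $\int_{\de B_1}\varphi_\alpha^2\,d\HH^{d-1} = \int_{\de B_1}\psi^2\,d\HH^{d-1}$.

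Next I would substitute the expansion $\psi = \sum_j c_j \phi_j$. By orthonormality of the $\phi_j$ and integration by parts against $-\Delta_{\de B_1}\phi_j = \lambda_j\phi_j$,
$$\int_{\de B_1}\psi^2\,d\HH^{d-1} = \sum_j c_j^2,\qquad \int_{\de B_1}|\nabla_\theta \psi|^2\,d\HH^{d-1} = \sum_j \lambda_j c_j^2.$$
Putting these into the definition of $\GG_\mu$ yields the general formula
$$\GG_\mu(\varphi_\alpha) = \frac{1}{2\alpha+d-2}\sum_{j=1}^\infty\Big(\alpha^2+\lambda_j-\mu(2\alpha+d-2)\Big)c_j^2.$$
Specializing to $\alpha=\mu$ and simplifying $\mu^2 - \mu(2\mu+d-2) = -\mu(\mu+d-2) = -\lambda(\mu)$ gives \eqref{e:W_m_vero} at once.

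For \eqref{e:W_m} the work is purely algebraic. Using the identity
$$1-\kappa_{\alpha,\mu} = \frac{2\mu+d-2}{\alpha+\mu+d-2},$$
one obtains $(1-\kappa_{\alpha,\mu})\GG_\mu(\varphi_\mu) = \frac{1}{\alpha+\mu+d-2}\sum_j(\lambda_j-\lambda(\mu))c_j^2$, so the difference $\GG_\mu(\varphi_\alpha)-(1-\kappa_{\alpha,\mu})\GG_\mu(\varphi_\mu)$ has coefficient of $c_j^2$ equal to
$$\frac{\alpha^2+\lambda_j-\mu(2\alpha+d-2)}{2\alpha+d-2}-\frac{\lambda_j-\lambda(\mu)}{\alpha+\mu+d-2}.$$
Reducing to common denominator, the $\lambda_j$ contributions collapse to $-\lambda_j(\alpha-\mu)$ and the remaining $\mu$- and $\alpha$-terms combine into $\alpha\big[\alpha(\alpha+\mu+d-2)-\mu(2\alpha+d-2)\big] = \alpha(\alpha-\mu)(\alpha+d-2) = (\alpha-\mu)\lambda(\alpha)$. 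Hence the numerator is $(\alpha-\mu)(\lambda(\alpha)-\lambda_j)$, which combined with the denominator $(2\alpha+d-2)(\alpha+\mu+d-2)$ and the definition of $\kappa_{\alpha,\mu}$ produces exactly the right-hand side of \eqref{e:W_m}.

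There is really no obstacle here beyond careful bookkeeping of the algebra; the only step that might look mysterious is the factorization $\alpha(\alpha+\mu+d-2)-\mu(2\alpha+d-2) = (\alpha-\mu)(\alpha+d-2)$, which is a routine verification and is what makes the factor $\lambda(\alpha)=\alpha(\alpha+d-2)$ appear naturally on the right-hand side.
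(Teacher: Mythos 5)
Your proposal is correct and follows essentially the same route as the paper: compute $\GG_\mu(\varphi_\alpha)$ in polar coordinates using orthonormality of the $\phi_j$, specialize to $\alpha=\mu$ for \eqref{e:W_m_vero}, and then verify the coefficientwise algebraic identity that yields \eqref{e:W_m}. The paper merely states that identity without carrying out the reduction to a common denominator, which you do correctly, including the key factorization $\alpha(\alpha+\mu+d-2)-\mu(2\alpha+d-2)=(\alpha-\mu)(\alpha+d-2)$.
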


\begin{proof}
Since $\| \varphi_j\|_{L^2(\partial B_1)} =1$ and $\|\nabla_{\theta} \varphi_j\|_{L^2(\partial B_1)} =\lambda_j$ for every $j \in \{0\} \cup \N$,  we have
	\begin{align*}
	\GG_\mu( \varphi_\alpha)&= \sum_{j=1}^\infty c_j^2\left(\int_0^1 r^{d-1}\,dr\int_{\partial B_1}\!\!\!\,d\HH^{d-1}\left[\alpha^2r^{2\alpha-2}\phi_j^2(\theta)+r^{2\alpha-2}|\nabla_\theta\phi_j|^2(\theta)\right]-\mu\int_{\partial B_1}\!\!\!\phi_j^2(\theta)\,d\HH^{d-1}\right)\\
	&= \sum_{j=1}^\infty c_j^2\left(\frac{\alpha^2+\lambda_j}{d+2\alpha-2}-\mu\right),
	\end{align*} 
	where in the above identity $d\theta$ stands for the Hausdorff measure $\HH^{d-1}$ on the sphere $\partial B_1$. When $\alpha=\mu$, we get 
	\eqref{e:W_m_vero}.
	We now notice that for every $\lambda$ we have
	$$\ds\left(\frac{\alpha^2+\lambda}{d+2\alpha-2}-\mu\right)-(1-\kappa_{\alpha,\mu})\left(\frac{\mu^2+\lambda}{d+2\mu-2}-\mu\right)
	=\frac{\kappa_{\alpha,\mu}}{d+2\alpha-2}(\lambda_\alpha-\lambda),$$
	which shows \eqref{e:W_m}. 
\end{proof}

\subsection{Energy of homogeneous minimizers} In this subsection we prove a lemma about the energy of homogeneous minimizers which will be useful in their classification.

\begin{lemma}\label{l:mumut}
	Let $\mu\ge 0$ and $t\in\R$. If the trace $c\in H^1(B_1)$ is such that the $(\mu+t)$-homogeneous extension $r^{\mu+t} c(\theta)$ is a solution of the thin-obstacle problem, then 
	\begin{equation}\label{e:mumut}
	\GG_{\mu}(r^{\mu+t}c)=t\|c\|_{L^2(\partial B_1)}^2\qquad\text{and}\qquad \GG_{\mu}(r^{\mu}c)=\left(1+\frac{t}{2\mu+d-2}\right)\GG_{\mu}(r^{\mu+t}c).
	\end{equation}
\end{lemma}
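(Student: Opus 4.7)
The plan is to reduce everything to two ingredients: the integration-by-parts identity for a thin-obstacle solution and the explicit formula for $\GG_\mu$ of a homogeneous function in spherical coordinates.

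Set $z(r,\theta) := r^{\mu+t}c(\theta)$, which is by hypothesis a minimizer of $\mathcal E$ in $\mathcal A$. Since $z$ is harmonic in $B_1 \setminus \Delta(z)$, vanishes on $\Delta(z)$, and is even in $x_d$, the Signorini complementarity $z\,\partial_{x_d}^+z = 0$ a.e.\ on $B_1'$ (combined with the regularity of minimizers provided by Theorem~\ref{t:ACS}) allows one to split the integration by parts into the two half-balls, obtaining
\[
\int_{B_1}|\nabla z|^2\,dx \;=\; \int_{\partial B_1} z\,\partial_r z\,d\HH^{d-1} \;=\; (\mu+t)\int_{\partial B_1} c^2\,d\HH^{d-1},
\]
because $z|_{\partial B_1}=c$ and $\partial_r z|_{\partial B_1}=(\mu+t)c$. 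Subtracting $\mu\|c\|_{L^2(\partial B_1)}^2$ from both sides immediately yields the first identity in \eqref{e:mumut}.

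For the second identity, I use the elementary spherical-coordinates computation that, for any $\alpha\ge 0$,
\[
\GG_\mu(r^\alpha c) \;=\; \frac{\alpha^2\|c\|_{L^2(\partial B_1)}^2 + \|\nabla_\theta c\|_{L^2(\partial B_1)}^2}{d-2+2\alpha} \;-\; \mu\,\|c\|_{L^2(\partial B_1)}^2.
\]
Applying this with $\alpha=\mu+t$ and comparing with the first identity already proved forces
\[
\|\nabla_\theta c\|_{L^2(\partial B_1)}^2 \;=\; (\mu+t)(\mu+t+d-2)\,\|c\|_{L^2(\partial B_1)}^2 \;=\; \lambda(\mu+t)\,\|c\|_{L^2(\partial B_1)}^2,
\]
with $\lambda(\cdot)$ as in \eqref{defn:lambda}. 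Plugging this value back into the displayed formula with $\alpha=\mu$ gives
\[
\GG_\mu(r^\mu c) \;=\; \frac{\lambda(\mu+t)-\lambda(\mu)}{2\mu+d-2}\,\|c\|_{L^2(\partial B_1)}^2.
\]
A one-line algebraic simplification using $\lambda(\mu+t)-\lambda(\mu) = t\,(2\mu+t+d-2)$ rewrites the right-hand side as $\bigl(1+\tfrac{t}{2\mu+d-2}\bigr)\,t\,\|c\|_{L^2(\partial B_1)}^2$, and invoking the first identity of \eqref{e:mumut} concludes.

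The only non-routine step is the integration by parts: one has to check that the boundary contribution on the flat face $B_1'$ vanishes despite $z$ not being $C^2$ there. This is a standard consequence of the Signorini conditions ($z\ge 0$, $\partial_{x_d}^+ z\le 0$, and $z\,\partial_{x_d}^+ z = 0$ on $B_1'$) combined with the $C^{1,1/2}$ regularity of $z$ in each half-ball given by Theorem~\ref{t:ACS}. Everything else is a direct calculation, and no Fourier decomposition on the sphere is actually needed, since the hypothesis that $z$ is a genuine minimizer already supplies the relation $\|\nabla_\theta c\|^2 = \lambda(\mu+t)\|c\|^2$ that would otherwise require summing over eigenmodes as in Lemma~\ref{l:thin_fourier}.
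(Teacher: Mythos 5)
Your proof is correct and follows essentially the same route as the paper: both arguments reduce to the fact that a $(\mu+t)$-homogeneous minimizer satisfies $\int_{B_1}|\nabla z|^2=(\mu+t)\|c\|_{L^2(\partial B_1)}^2$ (equivalently $\GG_{\mu+t}(z)=0$), extract from it the relation $\|\nabla_\theta c\|^2=\lambda(\mu+t)\|c\|^2$, and then evaluate $\GG_\mu(r^\mu c)$ via the explicit formula for the Weiss energy of a homogeneous function. The only cosmetic difference is that the paper quotes the vanishing of the Weiss energy and Lemma~\ref{l:thin_fourier} directly, whereas you re-derive both from integration by parts and the spherical-coordinates computation.
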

\begin{proof}
Since the Weiss energy vanishes for minimizers with the corresponding homogeneity, $\GG_{\mu+t}(r^{\mu+t} c(\theta))=0$, we get that 
	$$\|\nabla_\theta c\|_{L^2(\partial B_1)}^2=\lambda(\mu+t)\|c\|_{L^2(\partial B_1)}^2.$$
	Hence, we have $\ds \GG_{\mu}(r^{\mu+t}c)=\GG_{\mu+t}(r^{\mu+t}c)+t\|c\|_{L^2(\partial B_1)}^2=t\|c\|_{L^2(\partial B_1)}^2$ and by Lemma \ref{l:thin_fourier} \eqref{e:W_m_vero}
	\begin{align*}
	\GG_{\mu}(r^{\mu}c)&=\frac{1}{2\mu+d-2}(\|\nabla_\theta c\|_{L^2(\partial B_1)}^2-\lambda(\mu)\| c\|_{L^2(\partial B_1)}^2)=\frac{\lambda(\mu+t)-\lambda(\mu)}{2\mu+d-2}\| c\|_{L^2(\partial B_1)}^2\\
	&=\left(1+\frac{t}{2\mu+d-2}\right)t\| c\|_{L^2(\partial B_1)}^2=\left(1+\frac{t}{2\mu+d-2}\right)\GG_{\mu}(r^{\mu+t}c).
	\end{align*}
\end{proof}

\section{Epiperimetric inequality for the regular points: Proof of Theorem \ref{t:epi:3/2}.}\label{sec:epi-reg}

In this section, after some preliminary considerations about $\sfrac32$-homogeneous minimizers of $\mathcal E$, we prove the epiperimetric inequality at regular points Theorem \ref{t:epi:3/2}.

\subsection{Global minimizers of frequency $\sfrac32$} In \cite{AtCaSa} Athanasopoulos, Caffarelli and Salsa notice that there are no point of frequency smaller than $\sfrac32$. On the other hand, one can easily construct global $\sfrac32$-homogeneous solution for which the point $0$ is on the free boundary. In dimension two, one such a solution expressed in polar coordinates is $\ds h_{\sfrac 32}(r,\theta)=r^{\sfrac32}\cos\left({3\theta}/2\right)$, for $r>0$ and $\theta\in(-\pi,\pi)$. In $\R^d$, it is sufficient to consider the two-dimensional solution $h_{\sfrac32}$ extended invariantly in the remaining $d-2$ coordinates. More generally, for a given direction $e\in\partial B_1\cap \{x_d=0\}$ we consider the function $h_e$ in \eqref{eqn:he}, which is a $\sfrac32$-homogeneous global solution of the thin obstacle problem. With a slight abuse of the notation, in polar coordinates, we will sometimes write $h_e(r,\theta)=r^{3/2}h_e(\theta)$. We notice that $h_e$ has the following properties: 
\begin{enumerate}[(i)]
	\item The $L^2(\partial B_1)$-projection of $h_e(\theta)$ on the space of linear functions is non-zero and is given by $c\,x\cdot e$, with $c>0$. Notice that the space of linear functions coincides with the eigenspace of the spherical laplacian corresponding to the eigenvalue $\lambda_2=\dots=\lambda_d=d-1$. Thus, $h_e$ has a non-zero $(d-1)$-mode on the sphere.
         \item $h_e$ is harmonic on $B_1\setminus(\{x_d=0\}\cap \{x\cdot e>0\})$. Thus, an integration by parts gives that, for every $\psi\in H^1(B_1)$ such that $\psi =0$ on $\{x_d=0\}\cap \{x\cdot e<0\}$ we have 
	$$\int_{B_1}\nabla h_e\cdot\nabla\psi\,dx-\frac32\int_{\partial B_1} h_e\psi\,d\HH^{d-1} =0.$$
	In particular, $\GG_{3/2}(h_e)=0$. 
	\item The derivative $\frac{\partial h_e}{\partial x_d}$ has a jump across the set $\{x_d=0\}\cap\{x\cdot e<0\}$. 
	The distributional laplacian of $h_e$ on $B_1$, applied to the test function $\psi\in H^1(B_1)$, is given by 
	$$\int_{B_1}\psi\Delta h_e\,dx=2\int_{B_1'\cap \{x\cdot e<0\}}\left|\frac{\partial h_e}{\partial x_d}\right|\psi\,d\HH^{d-1}=2\int_{B_1'}\ \psi( x',0)\frac{3}{\sqrt 2}( x'\cdot e)_-^{1/2}\,d x'\,.$$
\end{enumerate}

\subsection{Proof of Theorem \ref{t:epi:3/2}}
Since $c$ is even with respect to the plane $\{x_d=0\}$, the projection of $c$ on the eigenspace of linear functions $E(\lambda_2)\subset H^1(B_1)$ is of the form $c_1\, x\cdot e$ for some constant $c_1>0$ and $e\in\partial B_1\cap\{x_d=0\}$. 
Let $C>0$ be such that the $L^2(\partial B_1)$-projections of $C\,h_e$ and $c$ on the eigenspace of linear functions $E(\lambda_2)$ are the same. 

Consider the function $u_0:B_1\to\R$ given by $u_0(x):=|x_d|^{3/2}$. Since $u_0(\theta)$ is even, it is orthogonal to the eigenspace $E(\lambda_2)$. Let the constant $c_0\in\R$ be such that the projections of $c-C h_e$ and $c_0u_0$ on the eigenspace $E(\lambda_1)$ are the same. 

We can now deduce that  $c:\partial B_1\to \R$ can be decomposed in a unique way as $C h_e+c_0u_0$, which has the same low modes of $c$, and of $\phi$, which contains only higher modes on $\partial  B_1$
$$c=C h_e+c_0u_0+\phi, \qquad \phi(\theta)=\sum_{\{j\,:\,\lambda_j>2d=\lambda_2\}}c_j\phi_j(\theta).$$  
The competitor $v:B_1\to\R$ is then given by 
\begin{equation}\label{e:competitor:3/2}
v(r,\theta)=C r^{3/2} h_e(\theta)+c_0 r^{3/2}u_0(\theta)+r^2\phi(\theta),
\end{equation}

\noindent We notice that since $c>0$ on the equator $\{x_d=0\}\cap \partial B_1$ and since $C>0$, assures that  $v(r,\theta) \geq r^2 c(\theta)$ is non-negative on the $(d-1)$-dimensional ball $B_1':=\{x_d=0\}\cap B_1$. 
We compute the energies of $r^{3/2}c$ and $v$.
We first show that, for any $\alpha$-homogeneous function $\psi(r,\theta)=r^\alpha\phi(\theta)$ with $\phi \in H^1(\partial B_1)$, we have  
\begin{equation}\label{e:step1}
\G(C h_e+c_0 u_0+\psi)=-\frac{3c_0^2}4\int_{B_1}|x_d|\,dx+\G(\psi)+\frac{1}{d+\alpha-\frac12}\beta(\phi),
\end{equation}
where 
$$\beta(\phi):=-\frac32c_0\int_{\partial B_1}\frac{\phi(\theta)}{\sqrt{|\theta_d|}}\,d\HH^{d-1}(\theta)+\frac{12}{\sqrt 2}C\int_{\partial B_1'}\phi(\theta')(\theta'\cdot e)_-^{1/2}\,d\HH^{d-2}(\theta').$$
Indeed, expanding $\GG_{3/2}$ and integrating by parts we get 
\begin{align}
\G(C h_e+c_0 u_0+\psi)&=C^2\G(h_e)+\G(c_0u_0+\psi)\nonumber\\
&\qquad+2C\left(\int_{B_1}\nabla h_e\cdot\nabla (c u_0+\psi)-\frac32\int_{\partial B_1} h_e (c_0 u_0+\psi)\right)\nonumber\\
&=\G(c_0u_0+\psi)-2C\int_{B_1}\ \psi \Delta h_e\,dx,\nonumber
\end{align}

\begin{align}
\G(c_0u_0+\psi)&=c_0^2\G(u_0)+\G(\psi)+2c_0\left(\int_{B_1}\nabla u_0\cdot\nabla\psi-\frac32\int_{\partial B_1} u_0\psi\right)\nonumber\\
&=c_0^2\G(u_0)+\G(\psi)-2c_0\int_{B_1}\ \psi \Delta u_0\,dx.\label{e:stima}
\end{align}
An integration by parts and the fact that $\Delta u_0(x)=\frac34|x_d|^{-1/2}$ give that 
\begin{equation}\label{e:Delta_u_0}
\GG_{\sfrac32}(u_0)=-\int_{B_1}u_0\Delta u_0\,dx=-\frac34\int_{B_1}|x_d|\,dx<0\,.
\end{equation}
The $\alpha$-homogeneity of $\psi$ and the precise expressions of $\Delta u_0$ and $\Delta h_e$ give that 
\begin{equation}\label{e:Delta_u_0_psi}
\int_{B_1}\ \psi \Delta u_0\,dx=\int_{B_1}\psi \frac34|x_d|^{-1/2}\,dx
=\frac{1}{d+\alpha-\frac12}\int_{\partial B_1}\phi(\theta)\frac34|\theta_d|^{-1/2}\,d\HH^{d-1}(\theta),
\end{equation}
\begin{equation}\label{e:Delta_h_e_psi}
\int_{B_1}\ \psi \Delta h_e\,dx=-2\int_{B_1'}\ \psi(x',0)\frac{3}{\sqrt 2}(x'\cdot e)_-^{1/2}\,dx'=-\frac{1}{d+\alpha-\frac12}\frac{6}{\sqrt 2}\int_{\partial B_1'}\phi(\theta')(\theta'\cdot e)_-^{1/2}\,d\HH^{d-2}(\theta')\ .
\end{equation}
Finally, by \eqref{e:stima}, \eqref{e:Delta_u_0}, \eqref{e:Delta_u_0_psi} and \eqref{e:Delta_h_e_psi} we get \eqref{e:step1}. Applying  \eqref{e:step1} to $\alpha=\sfrac32$ and $\alpha=2$ we get 
 \begin{equation}\label{e:step2}
\begin{array}{ll}
\ds\G(v)-\left(1-\frac{1}{2d+3}\right)\G(z)&\ds\le -\frac{3\,c_0^2}{4(2d+3)}\int_{B_1}|x_d|\,dx\\
&\qquad \ds  +\,\G(r^2\phi(\theta))-\left(1-\frac{1}{2d+3}\right)\G(r^{\sfrac32}\phi(\theta))\\
&\ds\le -\frac{3\,c_0^2}{4(2d+3)}\int_{B_1}|x_d|\,dx,
\end{array}
\end{equation}
where the last inequality is due to Lemma \ref{l:thin_fourier} with $\mu=3/2$, which concludes the proof. \qed
\begin{oss}\label{oss:3/2}
In this remark we are interested in the equality case of the epiperimetric inequality \eqref{e:epi:3/2}. Indeed, if there was an equality in  \eqref{e:epi:3/2}, then by \eqref{e:step2} we should have that $c_0=0$ and also 
$$\G(r^2\phi(\theta))-\left(1-\frac{1}{2d+3}\right)\G(r^{\sfrac32}\phi(\theta))=0.$$
By Lemma \ref{l:thin_fourier}, we get that $\phi$ is an eigenfunction on the sphere $\partial B_1$ corresponding to the eigenvalue $\lambda(2)=2d$, that is the restriction of a $2$-homogeneous harmonic polynomial. Moreover, since the trace is $c$ is non-negative on $\partial B_1'$  and $h_e=0$ on $B_1'\cap\{x\cdot e<0\}$ we get that $\phi\ge 0$ on $B_1'\cap\{x\cdot e<0\}$ and by the fact that $\phi$ is even, we get $\phi\ge 0$ on $B_1'$. 
\end{oss}


\section{Logarithmic epiperimetric inequality for $2m$-singular points: Proof of Theorem \ref{t:epi:2m}}\label{sec:epi-sing}



If $\GG_{2m}(z)\le 0$, the conclusion is trivial, taking $h\equiv z$. Thus in the proof we assume $\GG_{2m}(z)>0$.

We decompose the trace $c:\partial B_1\to\R$ in Fourier series as 
$$
c(\theta) = \sum_{j=1}^\infty c_j \phi_j(\theta),$$
where by $\phi_j$ we denote the eigenfunctions of the Laplacian on the sphere, by $\lambda_j$ the corresponding eigenvalues and by $\alpha_j$ the corresponding homogeneities (see Subsection \ref{sub:fourier}), and we set 
\begin{equation}\label{e:epi:2m:Pphi}
P(\theta)\quad:\,=\sum_{\{j\,:\,\alpha_j\le 2m\}} c_j\,\phi_j(\theta)\qquad\text{and}\qquad \phi(\theta)\quad:\,=\sum_{\{j\,:\,\alpha_j> 2m\}} c_j\,\phi_j(\theta)\,.
\end{equation}
Let 
$$M:=-\min\big\{\min\{P(\theta),0\}\ :\ \theta\in\partial B_1,\ \theta_d=0\big\},$$
and let $h_{2m}$ be an eigenfunction, corresponding to the homogeneity $2m$, such that $h_{2m}\equiv1$ on the hyperplane $\{x_d=0\}\cap\partial B_1$. 
\begin{oss}[Construction of $h_{2m}$]\label{rmk:h2m}
In order to construct such an eigenfuction we first notice that the eigenspace corresponding to the homogeneity $2m$ consists of the restrictions to the sphere of $2m$-homogeneous harmonic polynomials in $\R^d$. Thus it is sufficient to construct a $2m$-homogeneous harmonic polynomial whose restriction to the space $\{x_d=0\}$ is precisely $\big(x_1^2+\dots+x_{d-1}^2\big)^m$. We define 
$$h_{2m}(x_1,\dots,x_d):=\sum_{n=0}^m C_n x_d^{2n}(x_1^2+\dots+x_{d-1}^2)^{m-n},$$
where $C_0=1$ and, for every $n\ge 1$, $C_n$ is given by the formula
$$C_{n}:=-\frac{2(m-n+1)(d-1+2m-2n)}{2n(2n-1)}C_{n-1}.$$
It is immediate to check that $C_n$ is explicitely given by
$$C_n=\frac{(-2)^n\,m!}{(2n)!\,(m-n)!}\prod_{j=1}^n(d-1+2m-2j),$$
which concludes the construction of $h_{2m}$. 
\end{oss}
The $2m$-homogeneous extension $z$ of $c$ can be written as
$$z(r,\theta)=r^{2m}P(\theta)+M\, r^{2m} h_{2m}(\theta)-M\, r^{2m}h_{2m}(\theta)+r^{2m}\phi(\theta).$$
Our competitor $h$ is given by 
\begin{equation}\label{e:competitor:2m}
h(r,\theta)=r^{2m}P(\theta)+M\, r^{2m} h_{2m}(\theta)-M\,r^{\alpha}h_{2m}(\theta)+r^{\alpha}\phi(\theta).
\end{equation}
for some $\alpha>2m$ to be chosen later. 
Notice that $h$ is non-negative on the set $\{x_d=0\}\cap\partial B_1$.

We will choose the homogeneity $2m<\alpha\le 2m+1$ such that
\begin{equation}\label{e:choice_of_alpha}
\kappa_{\alpha,2m}:=\frac{\alpha-2m}{\alpha+2m+d-2}=\ds\eps \|\nabla_\theta \phi\|_{L^2(\partial B_1)}^{2\gamma}.
\end{equation}
Subsequently we will choose $\eps$ to be small enough, but yet depending only on the dimension.  We now prove the epiperimetric inequality \eqref{e:epi:2m}. We proceed in three steps. 

\medskip 

\noindent {\emph{Step 1.}} 
There are explicit (given in \eqref{e:2m:energy:constants}) constants $C_1$ and $C_2$, depending only on $d$ and $m$, such that for every $2m<\alpha\le 2m+\frac12$ the following inequality does hold:  
\begin{align}
\GG_{2m}(h)-(1-\kappa_{\alpha,2m})\GG_{2m}(z)\le C_1\kappa_{\alpha,2m}^2 \,M^2-C_2\kappa_{\alpha,2m}\|\nabla_\theta \phi\|_{L^2(\partial B_1)}^2.
\label{e:2m:energy}
\end{align}
We set for simplicity 
\begin{equation}\label{e:2m:notations}
\begin{array}{rcl}
\ds\psi(r,\theta)&:=&\ds\sum_{\{j,\ \alpha_j<2m\}} c_j r^{2m}\phi_j(\theta),\\
\\
\ds H_{2m}(r,\theta)&:=&\ds M\,r^{2m}h_{2m}(\theta)+\sum_{\{j,\ \alpha_j=2m\}} c_j r^{2m}\phi_j(\theta),\\
\\
\ds\varphi(r,\theta)&:=&\ds-M\,r^{2m}h_{2m}(\theta)+\sum_{\{j,\ \alpha_j>2m\}} c_j r^{2m}\phi_j(\theta),\\
\\
\ds\tilde\varphi(r,\theta)&:=&\ds-M\,r^{\alpha}h_{2m}(\theta)+\sum_{\{j,\ \alpha_j>2m\}} c_j r^{\alpha}\phi_j(\theta). 
\end{array}
\end{equation}
Thus, $h$ and $z$ are given by 
$$z=\psi+H_{2m}+\varphi\qquad\text{and}\qquad h=\psi+H_{2m}+\tilde\varphi.$$
We first notice that the harmonicity and $2m$-homogeneity of $H_{2m}$ imply
$$\GG_{2m}(z)=\GG_{2m}(\psi+\varphi)\qquad\text{and}\qquad \GG_{2m}(h)=\GG_{2m}(\psi+\tilde\varphi).$$
Moreover, by definition $\psi$ is orthogonal in $L^2(B_1)$ and $H^1(B_1)$ to both $\varphi$ and $\tilde\varphi$. Thus, we get 
$$\GG_{2m}(z)=\GG_{2m}(\psi)+\GG_{2m}(\varphi)\qquad\text{and}\qquad \GG_{2m}(h)=\GG_{2m}(\psi)+\GG_{2m}(\tilde\varphi).$$
We now notice that, since $\psi$ contains only lower frequencies, we have $\GG_{2m}(\psi)<0$. Thus, 
\begin{align*}
\GG_{2m}(h)-(1-\kappa_{\alpha,2m})\GG_{2m}(z)&=\kappa_{\alpha,2m}\GG(\psi)+\GG_{2m}(\tilde\varphi)-(1-\kappa_{\alpha,2m})\GG_{2m}(\varphi)\\
&\le \GG_{2m}(\tilde\varphi)-(1-\kappa_{\alpha,2m})\GG_{2m}(\varphi)
\end{align*}
By  Lemma \ref{l:thin_fourier} we have that
\begin{align}
\GG_{2m}(\tilde\varphi)-(1-\kappa_{\alpha,2m})\GG_{2m}(\varphi)&=  M^2\|h_{2m}\|_{L^2(\partial B_1)}^2\frac{\kappa_{\alpha,2m}}{d+2\alpha-2}(-\lambda(2m)+\lambda(\alpha))\nonumber\\
&\qquad \qquad +\frac{\kappa_{\alpha,2m}}{d+2\alpha-2}\sum_{\{j\,,\,\alpha_j>2m\}}^\infty (-\lambda_j+\lambda(\alpha)) c_j^2\nonumber\\
&=  M^2\|h_{2m}\|_{L^2(\partial B_1)}^2\kappa_{\alpha,2m}^2\frac{(2m+\alpha+d-2)^2}{d+2\alpha-2}\nonumber\\
&\qquad \qquad+\frac{\kappa_{\alpha,2m}}{d+2\alpha-2}\sum_{\{j\,,\,\alpha_j>2m\}}^\infty (-\lambda_j+\lambda(\alpha)) c_j^2.\label{e:2m:energy:mainest}
\end{align}
If we consider the further restriction $2m<\alpha\le 2m+\sfrac12$, then there is a constant $C_2>0$, depending on $d$ and $m$, such that 
\begin{align}
\sum  (\lambda_j-\lambda(\alpha))c_j^2&=\sum \lambda_j c_j^2-\lambda(\alpha)\sum c_j^2\ge \sum \lambda_j c_j^2-\frac{\lambda_\alpha}{\lambda(2m+1)}\sum \lambda_jc_j^2\nonumber\\
&\ge \sum \lambda_j c_j^2-\frac{\lambda(2m+\frac12)}{\lambda(2m+1)}\sum \lambda_jc_j^2\ge  C_2 \sum \lambda_j c_j^2=C_2 \|\nabla_\theta \phi\|_{L^2(\partial B_1)}^{2},\label{e:salviamo_le_orche}
\end{align}
where all the sums are over $\{j\,,\,\alpha_j>2m\}$. Combinig \eqref{e:salviamo_le_orche} with \eqref{e:2m:energy:mainest} we get \eqref{e:2m:energy} with 
\begin{equation}\label{e:2m:energy:constants}
C_1=(4m+d)\|h_{2m}\|_{L^2(\partial B_1)}^2\qquad\text{and}\qquad C_2=\frac{\lambda(2m+1)-\lambda(2m+\frac12)}{\lambda(2m+1)}.\end{equation}
We conclude this Step 1 of the proof by noticing that we implicitly used the bounds on $c$ in \eqref{e:orchecattive}.
Indeed, in order to have the restriction $\alpha< 2m+\frac12$ we need an explicit bound, in terms of $d$ and $m$, on the norm $\|\nabla_\theta\phi\|_{L^2(\partial B_1)}$. Repeating the estimate \eqref{e:salviamo_le_orche} with $\alpha=2m$ we get that there is a constant $C_{d,m}$, depending on $m$ and $d$, such that
\begin{equation}\label{e:orche_volanti}
\|\nabla_\theta \phi\|_{L^2(\partial B_1)}^{2}\le C_{d,m}\!\!\!\!\!\!\!\sum_{\{j\,,\,\alpha_j>2m\}}  (\lambda_j-\lambda(2m))c_j^2= C_{d,m}\GG_{2m}(r^{2m}\phi(\theta)),\end{equation}
where the last inequality is due to Lemma \ref{l:thin_fourier}, equation \eqref{e:W_m_vero}. Using that $c=P+\phi$ and the orthogonality of $P$ and $\phi$ on the sphere, we get that 
\begin{align*}
\GG_{2m}(r^{2m}\phi(\theta))&\le \GG_{2m}(z)-\GG_{2m}(r^{2m}P(\theta))\le \GG_{2m}(z)+\frac{\lambda(2m)}{2m+d-2}\|P\|_{L^2(\partial B_1)}^2\\
&\le \GG_{2m}(z)+2m\|c\|_{L^2(\partial B_1)}^2\le 1+2m,
\end{align*}
which together with \eqref{e:orche_volanti} proves that there is a constant $C_{d,m}$ such that 
$$\|\nabla_\theta \phi\|_{L^2(\partial B_1)}^{2}\le C_{d,m}.$$
Thus, choosing $\eps\le \frac{1}{2C_{d,m}}$, the condition $\alpha-2m\le \sfrac12$ is satisfied for every trace $c$ for which \eqref{e:orchecattive} does hold.

\noindent{\it Step 2.} There is a constant $C_3>0$, depending on $d$ and $m$, such that 
\begin{equation}\label{e:2m:gamma}
M^2\le C_3\|\nabla_\theta \phi\|_{L^2(\partial B_1)}^{2(1-\gamma)}.
\end{equation}
We start by noticing that there is a constant $L_m$, depending only on $d$ and $m$, such that the eigenfunctions corresponding to the low frequencies are globally $L_m$-Lipschitz continuous, that is
$$\|\nabla_\theta \phi_j\|_{L^\infty(\partial B_1)}\le L_m\,,\quad\text{for every}\quad j\in\N\quad\text{such that}\quad \alpha_j\le 2m.$$
Now, since by hypothesis the trace $c(\theta)$ is such that $\|P\|_{L^2(\partial B^1)}^2\le \|c\|_{L^2(\partial B^1)}^2\le\Theta$, we have that all the constants $c_j$ in the Fourier expansion of $P$ are bounded by $\sqrt\Theta$. Thus, the function $P:\partial B_1\to\R$ is $L$-Lipschitz continuous for some $L>0$, depending on $d$, $m$ and $\Theta$. Denoting by $P_-$ the negative part of $P$, $P_-(\theta)=\min\{P(\theta),0\}$, we get that 
\begin{equation}\label{e:2m:gamma:lipest}
\int_{\mathbb{S}^{d-2}} P_-^2\,d\HH^{d-2}\ge C_d M^2 \left(\frac{M}{L}\right)^{d-2}=\frac{C_d}{L^{d-2}}M^{d},
\end{equation}
for some dimensional constant $C_d$. On the other hand, since $P+\phi$ is non-negative on $\mathbb{S}^{d-2}=\{x_d=0\}\cap\partial B_1$ we get that 
\begin{equation}\label{e:2m:gamma:positivity}
\int_{\mathbb{S}^{d-2}} \phi^2\,d\HH^{d-2}\ge \int_{\mathbb{S}^{d-2}} P_-^2\,d\HH^{d-2}.
\end{equation}
Now, by the trace inequality on the sphere $\partial B_1$, there is a dimensional constant $C_d$ such that 
\begin{equation}\label{e:2m:gamma:trace}
\begin{array}{ll}
\ds\int_{\mathbb{S}^{d-2}} \phi^2\,d\HH^{d-2}&\ds\le C_d \left(\int_{\mathbb{S}^{d-1}} |\nabla_\theta\phi|^2\,d\HH^{d-1}+\int_{\mathbb{S}^{d-1}} \phi^2\,d\HH^{d-1}\right)\\
&\ds\le C_d \left(1+\frac1{\lambda(2m)}\right)\int_{\mathbb{S}^{d-1}} |\nabla_\theta\phi|^2\,d\HH^{d-1},
\end{array}
\end{equation}
where the last inequality is due to the fact that in the Fourier expansion of $\phi$ there are only frequencies $\lambda_j>\lambda(2m)$. Combining \eqref{e:2m:gamma:lipest}, \eqref{e:2m:gamma:positivity} and \eqref{e:2m:gamma:trace}, we get \eqref{e:2m:gamma}. 

Notice that in this step we used the non-negativity of the trace $c$ (in the inequality \eqref{e:2m:gamma:positivity}) and also the condition that $c$ is bounded in $L^2(\partial B_1)$ (when we give the Lipschitz bound on $P$). More precisely, the constant $C_3$ depends on the norm $\|P\|_{L^2(\partial B_1)}$, which in turn is bounded by $\Theta$.

\medskip

\noindent{\it Step 3. Conclusion of the proof of Theorem \ref{t:epi:2m}.} 
Combining the inequalities \eqref{e:2m:energy} and \eqref{e:2m:gamma} we get
\begin{align*}
\GG_{2m}(h)-(1-\kappa_{\alpha,2m})\GG_{2m}(z)&\le C_1\kappa_{\alpha,2m}^2 M^2-C_2\kappa_{\alpha,2m}\|\nabla_\theta \phi\|_{L^2(\partial B_1)}^2\\
&\le \kappa_{\alpha,2m}^2 C_1C_3\|\nabla_\theta \phi\|_{L^2(\partial B_1)}^{2(1-\gamma)}-C_2\kappa_{\alpha,2m}\|\nabla_\theta \phi\|_{L^2(\partial B_1)}^2.
\end{align*}
By the definition of $\kappa_{\alpha,2m}$ we have  
\begin{align}
\GG_{2m}(h)-(1-\kappa_{\alpha,2m})\GG_{2m}(z)&\le \eps^2\|\nabla_\theta \phi\|_{L^2(\partial B_1)}^{4\gamma} C_1C_3\|\nabla_\theta \phi\|_{L^2(\partial B_1)}^{2(1-\gamma)}-C_2\eps \|\nabla_\theta \phi\|_{L^2(\partial B_1)}^{2\gamma} \|\nabla_\theta \phi\|_{L^2(\partial B_1)}^2\notag\\
&= \eps \left(\eps C_1C_3 -C_2\right) \|\nabla_\theta \phi\|_{L^2(\partial B_1)}^{2+2\gamma}\le  0,\label{e:epi:2m:stronger}
\end{align}
where, in order to have the last inequality, we choose  $\eps$ such that
$$0<\eps\le \frac{C_2}{C_1C_3}.$$
We now notice that , by Lemma \ref{l:thin_fourier}, we have 
\begin{align}
\GG_{2m}(z)&=\frac{1}{4m+d-2}\sum_{j=1}^\infty(\lambda_j-\lambda(2m))c_j^2\nonumber\\
&\le \frac{1}{4m+d-2}\sum_{\{j\,,\,\alpha_j>2m\}}^\infty(\lambda_j-\lambda(2m))c_j^2\le \sum_{\{j\,,\,\alpha_j>2m\}}^\infty\lambda_jc_j^2= \|\nabla_\theta\phi\|_{L^2(\partial B_1)}^2.\label{e:2m:final}
\end{align}
Thus, we get
\begin{align*}
\GG_{2m}(h)&\le \left(1-\kappa_{\alpha,2m}\right)\GG_{2m}(z)= \left(1-\eps\, \|\nabla_\theta \phi\|_{L^2(\partial B_1)}^{2\gamma}\right)\GG_{2m}(z) \le \big(1-\eps\,\GG_{2m}^\gamma (z)\big)\GG_{2m}(z),
\end{align*}
which is precisely \eqref{e:epi:2m}. Finally, we notice that in this last step of the proof we didn't use any specific condition on the trace $c$. 
\qed

We conclude this section with the following Remark, which will be useful for the characterization of the possible blow-up limits.
\begin{oss}\label{oss:epi:2m:improved}
In the hypotheses of Theorem \ref{t:epi:2m}, we have the following, slightly stronger version of the logarithmic epiperimetric inequality:  
\begin{equation}\label{e:epi:2m:improved}
\GG_{2m}(h)\le \GG_{2m}(z) \big(1-\eps \,|\GG_{2m}(z)|^{\gamma}\big)-\frac{C_2\eps}{2}\|\nabla_\theta\phi\|_{L^2(\partial B_1)}^{2+2\gamma},
\end{equation}
for which it is sufficient to choose $0<\eps<\frac{C_2}{2C_1C_3}$ 
in \eqref{e:epi:2m:stronger}, $\phi$ being the function containing the higher modes of the trace $c$ on the sphere $\partial B_1$ (see \eqref{e:epi:2m:Pphi}).
\end{oss}

\hfill

\section{Epiperimetric inequality for the points of frequency $2m-\sfrac12$ in dimension two. Proof of Theorem \ref{t:epi:2m-1/2}}\label{sec:epi-2d}
We prove the theorem in several steps. \\
{\it Step 1. Sectorial decomposition of $h_{2m-\sfrac 12}$.} We notice that the function $h_{2m-\sfrac 12}$ has $4m-1$ half-lines from the origin along which it vanishes. These lines correspond to the angles
$$s_i :=\frac{2 i}{4m-1} \pi, \qquad \mbox{for }i=1,...,4m-1,$$
 and they individuate $4m-1$ circular sectors in $B_1$ corresponding to the nodal domains of $h_{2m-\sfrac 12}$. We consider the following $2m$ sets, which are invariant under the transformation $\theta \to -\theta$
 $$S_j=\big\{ (r,\theta): r\in [0,1], \; \theta \in\, ] s_{j-1},  s_j[ \,\cup\, ]2\pi-s_{j},  2\pi-s_{j-1}[\big\},$$
 where $j=1,\dots,2m$.
 Notice that $S_1,\dots,S_{2m-1}$ are unions of two sectors of angle $\frac{2\pi}{4m-1}$, while $S_{2m}$ is the sector $\big\{ (r,\theta): r\in [0,1], \; \theta \in \,] s_{2m-1},  s_{2m}[\big\}$. 
We define the restrictions of $h_{2m-\sfrac 12}$ to these sectors for $j=1,...,2m$
$$f_j (r,\theta):= \ind_{S_j}(r,\theta) h_{2m-\sfrac 12}(r,\theta) = \Big( \ind_{] s_{j-1},  s_j[} (\theta) + \ind_{]2\pi-s_{j},  2\pi-s_{j-1}[ }(\theta) \Big)
h_{2m-\sfrac 12}(r,\theta)
. $$
We notice that, since $h_{2m-\sfrac 12}$ vanishes on $B_1\cap \partial S_j$, the fuctions $f_j$ are in $H^1(B_1)$. Moreover, they are $(2m-\sfrac12)$-homogeneous even functions, namely $f_j(r,\theta)=r^{2m-1/2}f_j(\theta)$ and $f_j(r,\theta) = f_j(r,-\theta)$. We claim that for any $b_1,..., b_{2m} \in \R$
\begin{equation}
\label{eqn:fette-en-0}
\GG_{2m-1/2} \Big(\sum_{i=1}^{2m}b_{i}f_{i}\Big) =0.
\end{equation}
Indeed, since the energy $\GG_{2m-1/2}$ is quadratic in its argument and for every $i \neq j$ the supports of $f_i$ and $f_j$ have negligible intersection, the energy of the linear combination is given by 
$$
\GG_{2m-1/2} \Big(\sum_{i=1}^{2m}b_{i}f_{i}\Big) = \sum_{i=1}^{2m}b_i^2\GG_{2m-1/2} (f_i). 
$$
Moreover the functions $f_i$ are harmonic in each $S_j$ and vanish on the rays delimiting their support, that is on $\partial S_j\cap B_1$. Thus $\GG_{2m-1/2} (f_i) = 0$ for every $i=1,..., 2m$, so that the previous inequality implies \eqref{eqn:fette-en-0}.\\

\noindent {\it Step 2. Decomposition of the datum $c$.}
We claim that we can write $c$ in a unique way as 
$$c(\theta)= \sum_{i=1}^{2m} a_i f_{i}(\theta) + \tilde c(\theta) \qquad \mbox{on}\quad \partial B_1,$$
where
\begin{itemize}
\item $a_1, ..., a_{2m} \in \R$ and $a_{2m} >0$
\item $ \tilde c \in H^1(\partial B_1)$ is even and it is orthogonal in $L^2(\partial B_1)$ to $1, \cos(\theta), ... , \cos((2m-1)\theta)$.
\end{itemize}
To prove this claim, we call $L$ the span of $1$, $cos(\theta)$,..., $\cos((2m-1)\theta)$, which is a linear subspace of $L^2(\partial B_1)$ of dimension $2m$. We set $P_L(c)$ to be the projection of $c$ onto $L$. To show the existence of $a_1,..., a_{2m} \in \R$, it is enough to prove that the $2m$ functions $P_L(f_1),..., P_L(f_{2m})$  are linearly independent, so that their span gives the whole $L$. Hence, we take any linear combination $b_1f_1+ ... +b_{2m}f_{2m}$, such that its projection on $L$ is $0$, aiming to prove that $b_1= ... =b_{2m}=0$. By \eqref{eqn:fette-en-0}, the energy of $b_1f_1+ ... +b_{2m}f_{2m}$ is $0$. On the other hand, since the function $b_1f_1+ ... +b_{2m}f_{2m}$ is assumed to have only modes higher than $2m-\sfrac12$ on $\partial B_1$, its $(2m-\sfrac12)$-homogenous extension has nonnegative energy thanks to \eqref{e:W_m_vero}, and its energy is $0$ if and only if $b_1f_1+ ... +b_{2m}f_{2m} \equiv 0$. Hence, this must be the case. 
Hence we can write in a unique way $P_L(c)$ as a linear combination of $P_L(f_1),..., P_L(f_{2m})$
$$P_L(c) =  \sum_{i=1}^{2m} a_i P_L(f_{i}).$$ 
Since $c$ is assumed to be close to $h_{2m-\sfrac 12}$ by \eqref{hp:vicino}, and since $h_{2m-\sfrac 12}$ is strictly positive on the support of $f_{2m}$, we can assume without loss of generality that  $a_{2m}>0$. Finally, we set $\tilde c = c- \sum_{i=1}^{2m} a_i f_{i} $.\\

\noindent{\it Step 3. Choice of an energy competitor and computation of the energy.}
We let $\alpha> 2m-1/2$ to be chosen later and we define an energy competitor for $c$ as 
$$h(r,\theta):=\sum_{j=1}^{2m} a_j f_j(r,\theta) + r^{\alpha}\tilde c(\theta)  = r^{\frac{4m-1}{2}} (c- \tilde c) + r^{\alpha}\tilde c 
.$$

The energy of $h$ can be written as
\begin{equation*}
\begin{split}
\GG_{2m-\sfrac12}(h)
= &\GG_{2m-\sfrac12} \big( r^{\frac{4m-1}{2}} (c- \tilde c) \big) +\GG_{2m-1/2} \big( r^{\alpha} \tilde c\big) 
\\&+ 2\int_{B_1} \nabla \big(r^{\frac{4m-1}{2}} (c- \tilde c)\big) \cdot \nabla \big( r^{\alpha} \tilde c \big) \, d\HH^2
-(4m-1) \int_{\partial B_1} (c- \tilde c) \tilde c \, d\HH^1.
\end{split}
\end{equation*}
By the deinition of $\tilde c$ and Step 1 we have that the first term in the right-hand side vanishes:
\begin{equation}\label{e:tuturutu}
\begin{split}
\!\!\!\GG_{2m-\sfrac12}(h)= &\GG_{2m-\sfrac12} \big( r^{\alpha} \tilde c\big) 
+ 2  \sum_{j=1}^{2m} a_j \left(\int_{B_1} \nabla f_j \cdot \nabla \big( r^{\alpha} \tilde c \big) \, d\HH^2
-(4m-1)   \int_{\partial B_1} f_j \tilde c \, d\HH^1\right).
\end{split}
\end{equation}
We rewrite the middle term integrating by parts, and using that $\Delta f_j = 0$ on $\{f_j\neq0\}$
\begin{equation*}
\begin{split}
\int_{B_1}& \nabla f_j \cdot \nabla \big( r^{\alpha} \tilde c \big) \, d\HH^2
= \int_{S_j} r^{\alpha} \tilde c(\theta)\,    \Delta f_j  + \int_{\partial S_j} \frac{\partial f_j}{\partial n} \, r^{\alpha} \tilde c
\\&=2 \int_{\partial B_1^+ \cap S_j} \frac{\partial f_j}{\partial r}r^{\alpha} \tilde c(\theta)
 \, d\HH^1+ 2 \int_{\{\theta =s_j\}} \frac{1}{r} \frac{\partial f_j}{\partial \theta} r^{\alpha} \tilde c(\theta) \, d\HH^1 - 2 \int_{\{\theta = s_{j-1}\}} \frac{1}{r} \frac{\partial f_j}{\partial \theta}r^{\alpha} \tilde c(\theta) \, d\HH^1. 
 \end{split}
\end{equation*}
Now since $f_j$ is $(2m-\sfrac12)$-homogeneous we can write $f_j(r,\theta)=r^{2m-1/2}f_j(\theta)$ and we get that
\begin{equation*}
\begin{split}
2 \int_{\partial B_1^+ \cap S_j} \frac{\partial f_j}{\partial r}r^{\alpha} \tilde c(\theta)\, d\HH^1&=2 \int_{\partial B_1^+ \cap S_j}\!\!\! \partial _r(r^{2m-1/2}f_j(\theta))r^{\alpha} \tilde c(\theta)
 \, d\HH^1=(4m-1) \int_{\partial B_1^+ } f_j \tilde c
 \, d\HH^1,\\
  2 \int_{\{\theta =s_j\}} \frac{1}{r} \frac{\partial f_j}{\partial \theta} r^{\alpha} \tilde c(\theta) \, d\HH^1 &- 2 \int_{\{\theta = s_{j-1}\}} \frac{1}{r} \frac{\partial f_j}{\partial \theta}r^{\alpha} \tilde c(\theta) \, d\HH^1\\
  &= 2 \int_0^1 r^{\alpha+\frac{4m-3}{2}} \left(\partial_{\theta} f_j(s_{j})\tilde c(s_{j})-\partial_{\theta} f_j(s_{j-1})\tilde c(s_{j-1})\right) \, dr \\
  &= \frac{2}{\alpha+2m-1/2}\big(\partial_{\theta} f_j(s_{j})\tilde c(s_{j})-\partial_{\theta} f_j(s_{j-1})\tilde c(s_{j-1})\big).
 \end{split}
\end{equation*}
Hence we can rewrite \eqref{e:tuturutu} as
\begin{equation}
\label{eqn:2d-en-alpha}
\GG_{2m-\sfrac12}(h)
= \GG_{2m-\sfrac12} \big( r^{\alpha} \tilde c\big) + \frac{2}{\alpha+2m-1/2}\sum_{j=1}^{2m} a_j\big(\partial_{\theta}f_j(s_{j})\tilde c(s_{j})  -\partial_{\theta}  f_j (s_{j-1})\tilde c(s_{j-1})\big).
\end{equation}
Since the previous two equalities hold also when $\alpha=2m-1/2$, we see that 
\begin{equation}
\label{eqn:2d-en-mu}
\GG_{2m-\sfrac12}(z)
= \GG_{2m-\sfrac12} \big( r^{\frac{4m-1}{2}} \tilde c\big) + \frac{2}{4m-1} \sum_{j=1}^{2m} a_j\big(\partial_{\theta}f_j(s_{j})\tilde c(s_{j})  -\partial_{\theta}  f_j (s_{j-1})\tilde c(s_{j-1})\big).
\end{equation}
\noindent {\it Step 4. Conclusion.} Setting $\kappa_{\alpha,2m-1/2}$ according to \eqref{eps_choice}, a suitable linear combination between the last terms in \eqref{eqn:2d-en-alpha} and \eqref{eqn:2d-en-mu} is $0$, because by the defintion of $\kappa_{\alpha,2m-1/2}$ we have
\begin{equation}
\label{eqn:kappa-helps}
 \frac{2}{\alpha+2m-1/2} - (1-\kappa_{\alpha,2m-1/2})  \frac{2}{4m-1} = 0.
\end{equation}
Putting together \eqref{eqn:2d-en-alpha}, \eqref{eqn:2d-en-mu} and \eqref{eqn:kappa-helps}, we find
$$\GG_{2m-\sfrac12}(h)-(1-\kappa_{\alpha,2m-1/2})\GG_{2m-\sfrac12}(z) = 
\GG_{2m-\sfrac12}\big( r^{\alpha} \tilde c\big)-(1-\kappa_{\alpha,2m-1/2})\GG_{2m-\sfrac12}\big(r^{\frac{4m-1}{2}} \tilde c\big).$$
Thanks to Lemma~\ref{l:thin_fourier}, in particular to \eqref{e:W_m}, we obtain that 
$$\GG_{2m-\sfrac12}\big( r^{\alpha} \tilde c\big)-(1-\kappa_{\alpha,2m-1/2})\GG_{2m-\sfrac12}\big(r^{\frac{4m-1}{2}} \tilde c\big) \leq 0,$$
because by definition $\tilde c$ is orthogonal to $1, \cos(\theta), ... , \cos((2m-1)\theta)$ (which, in dimension $2$, are the only eigenfunctions with corresponding homogeneity less than or equal to $2m-\sfrac 12$).\qed

\section{Admissible frequencies for the thin-obstacle problem}\label{sec:freq}
We first prove an easy version of the epiperimetric inequality useful for negative energies. Then we use this result, together with Lemma \ref{l:mumut} and Theorems \ref{t:epi:3/2} and \ref{t:epi:2m} to conclude the proof of Theorem \ref{t:freq}.

\subsection{Epiperimetric inequality for negative energies} The following proposition gives an epiperimetric inequality for negative energies.

\begin{prop}[Epiperimetric inequality for negative energies]\label{prop:epi:2m-neg}
	Let $d\ge 2$, 
	$c\in H^1(\partial B_1)$ be a function such that 
	its $2m$-homogeneous extension
	$z(r,\theta):=r^{2m}c(\theta)\in \mathcal A$ and $\|c\|_{L^2(\de B_1)}=1$.
	
	Then there exist a constant $\eps=\eps(d,m)>0$ and a function $h\in \mathcal A$ with $h=c$ on $\partial B_1$ and
	\begin{equation}\label{e:epi:2m-neg}
	\GG_{2m}(h)\le  (1+\eps )\GG_{2m}(z).
	\end{equation}
\end{prop}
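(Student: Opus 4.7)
Plan: The inequality is trivial with $h=z$ when $\mathcal{W}_{2m}(z)\ge 0$, so I focus on the nontrivial case $\mathcal{W}_{2m}(z)<0$; by \eqref{e:W_m_vero} this is equivalent to $\|\nabla_\theta c\|_{L^2(\partial B_1)}^2<\lambda(2m)$.

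My candidate competitor is the one-parameter family $h_\alpha(r,\theta):=r^\alpha c(\theta)$ indexed by $\alpha\in(0,2m)$. Admissibility is automatic from that of $z$: since $z\in\mathcal A$ forces $c\ge 0$ on the equator $\{x_d=0\}\cap\partial B_1$, the same holds for $h_\alpha$ on $B_1'$. Applying the Fourier identity \eqref{e:W_m} of Lemma~\ref{l:thin_fourier} with $\mu=2m$ gives
$$
\mathcal{W}_{2m}(h_\alpha)\;=\;(1-\kappa_{\alpha,2m})\,\mathcal{W}_{2m}(z)\;+\;\frac{\kappa_{\alpha,2m}}{d+2\alpha-2}\bigl(\lambda(\alpha)-\|\nabla_\theta c\|_{L^2(\partial B_1)}^2\bigr),
$$
where $\kappa_{\alpha,2m}=(\alpha-2m)/(\alpha+2m+d-2)<0$, so that $1-\kappa_{\alpha,2m}>1$. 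The first term already provides the desired improvement factor $(1+|\kappa_{\alpha,2m}|)$ over $\mathcal{W}_{2m}(z)<0$; it therefore suffices to choose $\alpha$ with $\lambda(\alpha)\ge \|\nabla_\theta c\|^2_{L^2(\partial B_1)}$, which is possible exactly by the sign assumption. For such $\alpha$ the remainder is nonpositive (since $\kappa_{\alpha,2m}<0$) and we obtain
$$
\mathcal{W}_{2m}(h_\alpha)\le (1+|\kappa_{\alpha,2m}|)\,\mathcal{W}_{2m}(z),\qquad |\kappa_{\alpha,2m}|=\frac{2m-\alpha}{2m+\alpha+d-2}.
$$

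The main obstacle I anticipate is upgrading $\varepsilon_c:=|\kappa_{\alpha,2m}|$, which a priori depends on $c$, to a dimensional constant $\varepsilon(d,m)$. The quantity $|\kappa_{\alpha,2m}|$ degenerates to $0$ as $\alpha\to 2m$, which is forced upon us when $\|\nabla_\theta c\|^2$ approaches $\lambda(2m)$ from below. I would handle this by exploiting the spectral gap of $-\Delta_{\partial B_1}$. In the regime $\|\nabla_\theta c\|^2\le \lambda(2m-1)$, the choice $\alpha=2m-1$ already gives the uniform bound $\varepsilon=1/(4m+d-3)$. In the complementary regime $\lambda(2m-1)<\|\nabla_\theta c\|^2<\lambda(2m)$, the $L^2$-mass of $c$ must concentrate near the eigenspace $E(\lambda(2m))$, while the hypothesis $\mathcal{W}_{2m}(z)<0$ still forces a nontrivial low-frequency component (the modes with $\alpha_j<2m$). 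In this case I would construct a different competitor by decomposing $c$ orthogonally along $E(\lambda(2m))$, extending the low-frequency remainder with the harmonic (i.e.\ mode-by-mode) homogeneity, and restoring admissibility via a correction of the form $M\,h_{2m}$ with $h_{2m}$ the $2m$-harmonic polynomial from the proof of Theorem~\ref{t:epi:2m}; the uniform gain then comes from the nontrivial low-mode content and the algebraic identity relating the $E(\lambda(2m))$-projection to $\mathcal{W}_{2m}(z)$.
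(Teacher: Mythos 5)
Your first computation is correct as far as it goes: with $h_\alpha=r^\alpha c(\theta)$, $\alpha<2m$, admissibility is inherited from $z$ and Lemma~\ref{l:thin_fourier} gives $\GG_{2m}(h_\alpha)\le(1+|\kappa_{\alpha,2m}|)\GG_{2m}(z)$ whenever $\lambda(\alpha)\ge\|\nabla_\theta c\|^2_{L^2(\partial B_1)}$. You also correctly diagnose why this cannot close the proof: as $\|\nabla_\theta c\|^2\to\lambda(2m)^-$ you are forced to take $\alpha\to 2m$, so $\eps$ degenerates; and if instead you freeze $\alpha$, the $2m$-modes and high modes of $c$ (which your competitor also stretches to homogeneity $\alpha$) produce a positive error of order $(2m-\alpha)^2\|c_=+c_>\|^2$ that is not controlled by $|\GG_{2m}(z)|$. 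The problem is that regime~2 of your repair --- the only hard case --- is left as a sketch, and the sketch has real problems. The assertion that $\lambda(2m-1)<\|\nabla_\theta c\|^2<\lambda(2m)$ forces the $L^2$-mass of $c$ to concentrate near $E(\lambda(2m))$ is false: a small coefficient on a very high mode can carry all the Dirichlet energy. What \emph{is} true, and what the paper uses, is that $\GG_{2m}(z)<0$ forces a nontrivial low-mode block $c_<$ with $\|c_<\|^2_{L^2}\gtrsim|\GG_{2m}(z)|$; but you never exploit this quantitatively. Moreover, extending the low-frequency remainder ``mode-by-mode with harmonic homogeneity'' breaks the admissibility correction: the pointwise bound $c_<\ge -M$ on the equator does not transfer to $\sum_{\alpha_j<2m}c_jr^{\alpha_j}\phi_j\ge -Mr^\beta$ for $r<1$, because the modes scale differently; the correction $Mh_{2m}$ works cleanly only if the whole low block is extended with a \emph{single} homogeneity.

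For comparison, the paper's proof needs no case split. It decomposes $c=c_<+c_=+c_>$, replaces only the block $(c_<+Mh_{2m})r^{2m}$ by $(c_<+Mh_{2m})r^{\alpha}$ with one fixed $\alpha\in(2m-1,2m)$ (compensating by $-Mh_{2m}r^{2m}$ inside the harmonic $2m$-block, which is energy-neutral), and leaves $c_=$ and $c_>$ untouched so that they contribute $0$ and $\ge 0$ respectively. The decisive quantitative ingredients, both absent from your proposal, are: (i) $M^2\le C_1\|c_<\|^2_{L^2(\partial B_1)}$, since $c_<$ lives in a fixed finite-dimensional space; and (ii) the cost of the correction is $O(\eps^2M^2)$ (because $\lambda(2m)-\lambda(\alpha)=\eps(2m+\alpha+d-2)^2$ is itself of order $\eps$), while the spectral-gap gain on the low modes is $\ge C_2\eps\|c_<\|^2$, linear in $\eps$. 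Choosing $\eps$ small but fixed then makes the error term nonpositive uniformly in $c$. Without (i) and (ii), or some substitute for them, your regime~2 does not yield a constant $\eps(d,m)$, so the proposal as written has a genuine gap precisely where the statement is nontrivial.
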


\begin{proof}For $j\in \N$, let $\phi_j$ be the eigenfunctions of the Laplacian on $\partial B_1$,  $\lambda_j$ and $\alpha_j$ the corresponding eigenvalues and homogeneities (see Subsection \ref{sub:fourier}).
	We decompose $c$ on $\partial B_1$ in Fourier as 
	$$
	c = \sum_{j=1}^\infty c_j \phi_j = \sum_{\{j\,:\,\alpha_j<2m\}} c_j\,\phi_j+\sum_{\{j\,:\,\alpha_j= 2m\}} c_j\,\phi_j + \sum_{\{j\,:\,\alpha_j>2m\}} c_j\,\phi_j =:c_<+ c_=+c_>$$
	We consider the maximum of the negative part of $c_<$ 
	$$M:=-\min\big\{\min\{c_<(\theta),0\}\ :\ \theta\in\partial B_1,\ \theta_d=0\big\}.$$
	Since $Q$ contains only low Fourier frequencies, $M$ is controlled by $\|Q\|_{L^2(\partial B_1)}$, namely, there is a constant $C_1:=C_1(d,m)>0$ such that 
	\begin{equation}\label{e:2m:gamma-neg}
	M^2 \le \Big(\sum_{\{j\,:\,\alpha_j<2m\}} |c_j| \Big)^2 \le C_1 \sum_{\{j\,:\,\alpha_j<2m\}} c_j^2  = C_1\|Q\|^2_{L^2(\partial B_1)}.
	\end{equation}
	Let $\alpha:= \alpha(d,m) \in (2m-1,2m)$ to be chosen later and let
	\begin{equation}\label{e:choice_of_alpha-neg}
	\eps:=\frac{2m-\alpha}{\alpha+2m+d-2}>0.
	\end{equation}
	Let $h_{2m}$ be the eigenfunction built in Remark~\ref{rmk:h2m}, corresponding to the homogeneity $2m$, such that $h_{2m}\equiv1$ on the hyperplane $\{x_d=0\}\cap\partial B_1$. 
	We set for simplicity 
	\begin{equation*}
	h_{<,\mu}(r,\theta):=(c_<(\theta) +M h_{2m}(\theta))r^{\mu} \qquad \mbox{for } \mu = 2m, \alpha,
	\end{equation*}
	\begin{equation*}
	\ds h_=(r,\theta):=\ds (c_= (\theta)- Mh_{2m}(\theta)) r^{2m},\qquad h_>(r,\theta):=c_> r^{2m}\phi_j(\theta).
	\end{equation*}
	We notice that $z$ can be written as a sum of these objects and we introduce the energy competitor $h$, obtaining by extending the lower modes of $c$ with homogeneity $\alpha$ and leaving the rest unchanged
	\begin{equation}\label{e:competitore_senza_nome}
	z=h_{<,2m}+h_{=}+h_{>} \qquad\text{and}\qquad h=h_{<,\alpha}+h_{=}+h_{>}.
	\end{equation}
	Since $0 \leq h_{<,\alpha} \geq h_{<,2m}$ on $B_1'$, we have that $h\geq z\geq 0$ in $B_1'$; moreover, $h=z$ on $\partial B_1$. 
	
	Next, we compute the energy of $z$ and $h$. 
	Since $h_{=}$ is harmonic and $2m$-homogenous, and since  $h_>$ is orthogonal in $L^2(B_1)$ and $H^1(B_1)$ to $h_{<,\mu}$, for $\mu = 2m, \alpha$ we have
	$$\GG_{2m}(h_{<,\mu}+h_{=}+h_{>} )=\GG_{2m}(h_{<,\mu}+h_{=}+h_{>} )
	=\GG_{2m}(h_{<,\mu})+\GG_{2m}(h_{>} ).
	$$
	Thus, we rewrite the quantity in \eqref{e:epi:2m-neg} and we observe that $\GG_{2m}(h_{>})\geq 0$ by  Lemma \ref{l:thin_fourier} 
	\begin{align}
	\GG_{2m}(h)-(1+\eps)\GG_{2m}(z)&=\GG_{2m}(h_{<,\alpha})-(1+\eps)\GG_{2m}(h_{<,2m})-\eps\GG_{2m}(h_{>})\nonumber\\
	&\le \GG_{2m}(h_{<,\alpha})-(1+\eps)\GG_{2m}(h_{<,2m}).\label{eqn:-almostepi-neg}
	\end{align}
	
	Denoting by $\lambda$ the function in \eqref{defn:lambda} by  Lemma \ref{l:thin_fourier} we rewrite the right-hand side as
	\begin{align}
	\GG_{2m}(h_{<,\alpha})&-(1+\eps)\GG_{2m}(h_{<,2m})\nonumber\\
	&=  M^2\|h_{2m}\|_{L^2(\partial B_1)}^2\frac{\eps(-\lambda(2m)+\lambda(\alpha))}{d+2\alpha-2} -\frac{\eps}{d+2\alpha-2}\sum_{\{j\,,\,\alpha_j<2m\}}^\infty (-\lambda_j+\lambda(\alpha)) c_j^2
	.\label{e:2m:energy:mainest-neg}
	\end{align}
	Since $2m-\sfrac12<\alpha<2m$, then setting $C_2:= \lambda(2m-1/2)-\lambda(2m-1) >0$, such that 
	\begin{align}
	\sum_{\{j\,,\,\alpha_j<2m\}}  (\lambda(\alpha)-\lambda_j)c_j^2&\ge C_2\sum_{\{j\,,\,\alpha_j<2m\}}c_j^2={C_2} \| Q\|_{L^2(\partial B_1)}^{2} \geq \frac{C_2}{C_1} M^2,\label{e:salviamo_le_orche-neg}
	\end{align}
	where in the last inequality we used \eqref{e:2m:gamma-neg}. Since $-\lambda(2m)+\lambda(\alpha) =\eps{(2m+\alpha+d-2)^2} $, combining \eqref{eqn:-almostepi-neg}, \eqref{e:salviamo_le_orche-neg} and \eqref{e:2m:energy:mainest-neg} we get 
	\begin{align}
	\GG_{2m}(h)-(1+\eps)\GG_{2m}(z)&\le 
	M^2\|h_{2m}\|_{L^2(\partial B_1)}^2\frac{\eps^2(2m+\alpha+d-2)^2}{d+2\alpha-2} -\frac{\eps C_2 M^2}{C_1(d+2\alpha-2)} \nonumber
	\\
	&\leq \frac{M^2\eps}{d+2\alpha-2} \Big(\|h_{2m}\|_{L^2(\partial B_1)}^2{(4m+d)^2}\eps- \frac{C_2}{C_1}  \Big).
	\label{e:2m:energy-neg}
	\end{align}
	Choosing $\eps:= \eps(d,m)$ small enough, namely $\alpha$ sufficiently close to $2m$ by the choice of $\eps$ in \eqref{e:choice_of_alpha-neg}, we find that the right-hand side in \eqref{e:2m:energy-neg} is less than or equal to $0$, that is \eqref{e:epi:2m-neg}.
\end{proof}

\subsection{Proof of Theorem \ref{t:freq}} We divide the proof in two steps.

\subsubsection{Frequencies $\frac32$ and $2m$}
	
	We first prove \eqref{e:class:3/2}. Let $c:\partial B_1\to\R$ be the trace of a $\sfrac32$-homogeneous non-trivial global solution $z\in \KK_{\sfrac32}$ of the thin-obstacle problem. Let $v$ be the competitor defined in \eqref{e:competitor:3/2}. By the optimality of $z$ and the epiperimetric inequality \eqref{e:epi:3/2} we get that 
$$0=\GG_{\sfrac32}(z)\le \GG_{\sfrac32}(v)\le \left(1-\frac1{2d+3}\right)\GG_{\sfrac32}(z)=0,$$
and in particular both the inequalities are in fact equalities. By Remark \ref{oss:3/2} we get that $z=Ch_e+r^{3/2}\phi$, where $C\ge 0$, $e\in\partial B_1'$ and $\phi:\partial B_1\to\R$ is an eigenfunction of the sperical Laplacian, corresponding to the eigenvalue $\lambda(2)=2d$, and such that $\phi\ge 0$ on $\partial B_1'$. Thus, we have 
\begin{align*}
0=\G(z)&=\G(h)+\G(r^{\sfrac32}\phi)+2C\left(\int_{B_1}\nabla h_e\cdot\nabla (r^{\sfrac32}\phi(\theta))-\frac32\int_{\partial B_1} h_e\phi\,d\HH^{d-1}\right)\\
&\ge \G(h)+\G(r^{\sfrac32}\phi)\ge \G(r^{\sfrac32}\phi)\ge 0, 
\end{align*}   
where, by Lemma \ref{l:thin_fourier} the last inequality is an equality if and only if $\phi\equiv0$. Thus, $z=Ch_{e}$ for some $e\in\partial B_1'$ and $C\ge 0$. Since $0\neq\KK_{\sfrac32}$ we get that $C>0$, which concludes the proof of \eqref{e:class:3/2}.
	
	We now prove \eqref{e:class:2m}. Suppose that $c\in H^1(\partial B_1)$ is the trace of $2m$-homogeneous non-trivial global solution of the thin-obstacle problem. Let $h$ be the competitor from \eqref{e:competitor:2m}. By the optimality of $r^{2m}c(\theta)$ and the improved version of the logarithmic epiperimetric inequality \eqref{e:epi:2m:improved} we have 
	$$0=\GG_{2m}(r^{2m}c)\le \GG_{2m}(h)\le \GG_{2m}(r^{2m}c)\big(1-\eps \,|\GG_{2m}(r^{2m}c)|^{\gamma}\big)-\eps_2\|\nabla_\theta\phi\|_{L^2(\partial B_1)}^{2+2\gamma}=-\eps_2\|\nabla_\theta\phi\|_{L^2(\partial B_1)}^{2+2\gamma}.$$
	Thus, necessarily $\|\nabla_\theta\phi\|_{L^2(\partial B_1)}=0$, that is the Fourier expansion of $c$ on the sphere $\partial B_1$ contains only low frequencies: $\ds c(\theta)=\sum_{\{j\,,\,\alpha_j\le 2m\}}c_j\phi_j(\theta).$ Now by Lemma  \ref{l:thin_fourier} we get 
	$$0=\GG_{2m}(r^{2m}c)=\frac1{4m+d-2}\sum_{\{j\,,\,\alpha_j\le 2m\}}(\lambda_j-\lambda(2m))c_j^2\le0,$$
	and so all coefficients, corresponding to frequencies with $\alpha_j<2m$, must vanish. Thus $c$ is a non-zero eigenfunction on the sphere corresponding to the eigenvalue $\lambda(2m)=2m(2m+d-2)$.

\subsubsection{Frequency gap}
	Let us first prove that 
	\begin{center}
		$\KK_{\lambda}=\emptyset$ for every $\lambda\in(\sfrac32,2)$.
	\end{center}
	Let $\lambda=\sfrac32+t\in(\sfrac32,2)$ be an admissible frequency and $c\in H^1(\partial B_1)$ a non-trivial function whose $(\sfrac32+t)$-homogeneous extension $r^{\sfrac32+t} c(\theta)\in \KK_{\sfrac32+t}$ is a solution of the thin-obstacle problem. 
	Let $v$ be the competitor from \eqref{e:competitor:3/2}. By the minimality of $r^{\sfrac32+t} c(\theta)$, Theorem \ref{t:epi:3/2} and Lemma \ref{l:mumut}, applied with $\mu=\sfrac32$, we have that 
	\begin{align*}
	\GG_{3/2}(r^{\sfrac32+t}c)\le \GG_{3/2}(v)&\le\left(1-\frac1{2d+3}\right)\GG_{3/2}(r^{\sfrac32}c)=\left(1-\frac1{2d+3}\right)\left(1+\frac{t}{d+1}\right)\GG_{3/2}(r^{\sfrac32+t}c).
	\end{align*}
	Since  $\GG_{3/2}(r^{\sfrac32+t}c)>0$, we get
	$$\left(1-\frac1{2d+3}\right)\left(1+\frac{t}{d+1}\right)\ge 1,$$ 
	which implies that $t\ge1/2$ and concludes the proof of the claim.

	We now fix $m\in\N_+$. We will show that there are constants $c_m^+>0$ and $c_m^->0$, depending only on $d$ and $m$, such that 
	\begin{center}
		 $\KK_\lambda=\emptyset$ for every $\lambda\in (2m-C_-,2m+C_+)\setminus \{2m\}$.
	\end{center} 
	Let $\lambda=2m+t$ be an admissible frequency and $c\in H^1(\partial B_1)$, $\|c\|_{L^2(B_1)}=1$, a trace whose $(2m+t)$-homogeneous extension $r^{2m+t} c(\theta)$ is a minimizer of the thin-obstacle problem. 
	
	Suppose first that $t>0$. Let $h$ be the competitor from \eqref{e:competitor:2m}. By the minimality of $r^{2m+t}c$, Theorem \ref{t:epi:2m} and Lemma \ref{l:mumut}, applied with $\mu=2m$, we have that 
	\begin{align*}
	\GG_{2m}(r^{2m+t}c)\le \GG_{2m}(h)\le\left(1-\eps t^\gamma\right) \GG_{2m}(r^{2m}c)=\left(1-\eps t^\gamma\right)\left(1+\frac{t}{4m+d-2}\right)\GG_{3/2}(r^{\sfrac32+t}c),
	\end{align*}
	where for the first inequality we used that $\GG_{2m}(r^{2m}c)\ge \GG_{2m}(r^{2m+t}c)= t>0$. By the positivity of $\GG_{2m}(r^{2m+t}c)$, we get  
	$$\left(1-\eps t^\gamma\right)\left(1+\frac{t}{4m+d-2}\right)\ge 1,$$
	which provides us with the constant $c_m^+$. 
	
	Let now $t<0$. Let $h$ be the competitor from \eqref{e:competitore_senza_nome}. By the minimality of $r^{2m+t}c$, Proposition \ref{prop:epi:2m-neg} and Lemma \ref{l:mumut}, applied with $\mu=2m$, we have  
	\begin{align*}
	\GG_{2m}(r^{2m+t}c)\le \GG_{2m}(h)\le\left(1+\eps \right) \GG_{2m}(r^{2m}c)=\left(1+\eps \right)\left(1+\frac{t}{4m+d-2}\right)\GG_{3/2}(r^{\sfrac32+t}c).
	\end{align*}
	Now since $\GG_{2m}(r^{2m+t}c)=t<0$ we get that 
	\begin{equation}
	\label{eqn:facciamoloesplicito}
	\left(1+\eps \right)\left(1+\frac{t}{4m+d-2}\right)\le1,
	\end{equation}
	which gives us $\ds c_m^-=\eps (4m+d-2)/(1+\eps)$, where $\eps$ is the constant from Proposition \ref{prop:epi:2m-neg}.
\qed

\begin{oss}\label{rmk:explicit}
Taking for instance $d=3$, $m=2$, we show how the constants in Theorem~\ref{t:freq} can be made explicit. The polynomial $h_{4}$ of Remark \ref{rmk:h2m} is given by $\frac{32}3 x_3^4-10x_3^2(x_1^2+x_2^2)+(x_1^2+x_2^2)^2$ (and $\|h_{4}\|_{L^2(\partial B_1)} \sim 9.6$), the constant $C_1=16$ in \eqref{e:2m:gamma-neg} is the number of eigenfunctions with homogeneity less than $4$, the constant $C_2 $ in \eqref{e:salviamo_le_orche-neg} is $\sfrac{15}4$.
Hence, the optimal $\eps$ in \eqref{e:2m:energy-neg} and the corresponding $c_2^-$ deduced from \eqref{eqn:facciamoloesplicito} are given by
$$\eps = \frac{C_2}{C_1\|h_{4}\|_{L^2(\partial B_1)}^2{11^2} }\qquad\text{and}\qquad c_2^- = \frac{9\eps}{1+\eps} \geq 0.0015.$$
\end{oss}

\section{Regularity of the regular and singular parts of the free-boundary}\label{sec:regFB}

The first part of Theorem \ref{t:main} was first proved in \cite{AtCaSa}. Once we have the epiperimetric inequality \eqref{e:epi:3/2}, it follows by a standard argument that can be found for example in \cite{FoSp, GaPeVe}. So we proceed with the proof of (ii). We start with the following proposition.

\subsection{Rate of convergence of the blow-up sequences}
Before starting the proof we remark that, by a simple scaling argument, if in Theorem \ref{t:epi:2m} we replace the condition \eqref{e:orchecattive} with
$$
\int_{\de B_1} c^2\,d\HH^{d-1} \leq \Theta\qquad\text{and}\qquad  |\GG_{2m}(z)|\le \Theta,
$$
for some $\Theta>0$, then the epiperimetric inequality \eqref{e:epi:2m} still holds, with $\eps$ replaced by $\eps\,\Theta^{-\gamma}$. We will use this in the first step of the proof of the following

\begin{prop}[Decay of the Weiss' energy]\label{p:decay}
	Let $u\in H^1(B_1)$ be a minimizer of $\mathcal E$. Then for every $m\in \N$ and every compact set $K\Subset B_1'\cap \mathcal S^{2m}$, there is a constant $C:=C(m,d, K, \|u\|_{H^1(B_1)})>0$ such that for every free boundary point $x_0\in \mathcal S^{2m}\cap K$, the following decay holds
	\begin{equation}\label{e:L^2decay}
	\| u_{x_0,t}-u_{x_0,s} \|_{L^1(\partial B_1)} \leq C\, (-\log(t))^{-\frac{1-\gamma}{2\gamma}} \qquad  \mbox{for all}\quad 0< s<t < \dist (K,\de B_1)\,.
	\end{equation}
	In particular the blow-up limit of $u$ at $x_0$ is unique. 
\end{prop}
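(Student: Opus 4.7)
The plan is to set $e(r) := \GG^{x_0}_{2m}(r,u)$ and combine the monotonicity formula \eqref{e:monot} with the logarithmic epiperimetric inequality of Theorem~\ref{t:epi:2m} to derive a differential inequality of the form $e'(r) \geq \tfrac{c}{r}\, e(r)^{1+\gamma}$. Integration will yield a logarithmic decay for $e(r)$, and the $L^1$ decay \eqref{e:L^2decay} will follow by bounding the radial derivative of $r \mapsto u_{x_0,r}$ via the second nonnegative term in \eqref{e:monot}.

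\textbf{Differential inequality and decay of $e$.} I let $c_r := u_{x_0,r}|_{\partial B_1}$ and denote by $z_r,h_r$ respectively the $2m$-homogeneous extension of $c_r$ and the epiperimetric competitor built in Theorem~\ref{t:epi:2m} (applied in the rescaled form indicated at the opening of this section). Minimality of $u_{x_0,r}$ among functions with trace $c_r$ gives $e(r) = \GG_{2m}(u_{x_0,r}) \le \GG_{2m}(h_r)$, which together with the epiperimetric inequality yields $\GG_{2m}(z_r) - e(r) \ge \eps\, e(r)^{1+\gamma}$. Plugging this into \eqref{e:monot} and discarding the gradient term gives
$$e'(r) \ge \frac{(d-2+4m)\,\eps}{r}\, e(r)^{1+\gamma}.$$
Setting $f(r) := e(r)^{-\gamma}$ rewrites this as $f'(r) \le -c/r$, and integrating from $r$ to $r_0 := \dist(K,\de B_1)$ yields $e(r) \le C (-\log r)^{-1/\gamma}$ for $r$ sufficiently small.

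\textbf{From the decay of $e$ to $L^1$ convergence.} On $\partial B_1$ one has $\partial_r u_{x_0,r} = \tfrac{1}{r}\big(x\cdot \nabla u_{x_0,r} - 2m\, u_{x_0,r}\big)$, so the remaining positive term in \eqref{e:monot} gives $\|\partial_r u_{x_0,r}\|_{L^2(\partial B_1)}^2 \le e'(r)/r$. By the $L^2\hookrightarrow L^1$ embedding on $\partial B_1$ and the triangle inequality,
$$\|u_{x_0,t}-u_{x_0,s}\|_{L^1(\partial B_1)} \le C\int_s^t \sqrt{e'(r)/r}\, dr.$$
The key observation is to rewrite $\sqrt{e'(r)/r} = e'(r)/\sqrt{r\, e'(r)}$ and to use the ODE lower bound of the previous step to get $\sqrt{r\, e'(r)} \ge c'\, e(r)^{(1+\gamma)/2}$; the integrand is then dominated by a constant multiple of $e'(r)\, e(r)^{-(1+\gamma)/2}$, which, up to a constant, is exactly $\tfrac{d}{dr}\big[e(r)^{(1-\gamma)/2}\big]$. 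Integrating and using the decay of $e$ from Step~2,
$$\|u_{x_0,t}-u_{x_0,s}\|_{L^1(\partial B_1)} \le C\, e(t)^{(1-\gamma)/2} \le C(-\log t)^{-(1-\gamma)/(2\gamma)},$$
which is \eqref{e:L^2decay}. Uniqueness of the blow-up at $x_0$ then follows at once: $(u_{x_0,r})_r$ is Cauchy in $L^1(\partial B_1)$, and the uniform $H^1(B_1)$ bound together with the standard compactness of blow-up sequences upgrades this to convergence of the whole family.

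\textbf{Main obstacle.} The principal technical point is to establish the uniform-in-$(x_0,r)$ bounds on $\|c_r\|_{L^2(\partial B_1)}$ and on $|\GG_{2m}(z_r)|$ required to apply Theorem~\ref{t:epi:2m} in its rescaled form, with constants depending only on $d$, $m$, $K$ and $\|u\|_{H^1(B_1)}$. The first follows from the boundedness of $H^{x_0}(r)/r^{d-1+4m}$, a standard consequence of the frequency function being equal to $2m$ at each $x_0 \in K$ together with the compactness of $K$; the second follows by combining the inequality $\GG_{2m}(z_r) \le \tfrac{1}{4m+d-2}\|\nabla_\theta c_r\|_{L^2(\partial B_1)}^2$ (a direct consequence of Lemma~\ref{l:thin_fourier}) with a uniform bound on the spherical Dirichlet energy of $u_{x_0,r}$.
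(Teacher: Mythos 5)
Your proposal is correct and follows the paper's overall strategy (epiperimetric inequality $\Rightarrow$ differential inequality $\frac{d}{dr}\GG\ge \frac{c}{r}\GG^{1+\gamma}+f(r)$ $\Rightarrow$ logarithmic decay of $\GG$ $\Rightarrow$ $L^1$ estimate via the term $f(r)=\frac1r\int_{\partial B_1}(\nabla u_r\cdot\nu-2m\,u_r)^2$), but your final integration step is genuinely different and cleaner. The paper bounds $\|u_t-u_s\|_{L^1}$ by Cauchy--Schwarz as $C(\log t-\log s)^{1/2}(\GG(t)-\GG(s))^{1/2}$ and then must sum this over an \emph{exponentially dyadic} family of radii $2^{-2^k}r_0$ to beat the divergent logarithmic factor, obtaining a geometric series. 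You instead dominate the integrand pointwise: $(e'(r)/r)^{1/2}=e'(r)(re'(r))^{-1/2}\le c^{-1/2}e'(r)e(r)^{-(1+\gamma)/2}$, which is an exact derivative of $e(r)^{(1-\gamma)/2}$, so a single integration gives the bound $Ce(t)^{(1-\gamma)/2}\le C(-\log t)^{-(1-\gamma)/(2\gamma)}$ with no decomposition at all. Both routes yield the same exponent; yours avoids the dyadic bookkeeping at the cost of a formal division by $e(r)$ (harmless: if $e$ vanishes at some radius it vanishes identically below it, $f\equiv 0$ there, and the estimate is trivial on that range --- the paper's \eqref{e:mon_rem} has the same formal issue).

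One point deserves more care than your closing paragraph gives it: the hypothesis of Theorem~\ref{t:epi:2m} is a bound on $\GG_{2m}(z_r)$ for the \emph{homogeneous extension} $z_r$ of the trace $c_r$, not on $\GG_{2m}(u_{x_0,r})$. Your reduction $\GG_{2m}(z_r)\le \frac{1}{4m+d-2}\|\nabla_\theta c_r\|^2_{L^2(\partial B_1)}$ is the right one, but the "uniform bound on the spherical Dirichlet energy" is not a consequence of the $H^1(B_1)$ bound on $u_{x_0,r}$ alone (that controls no single boundary integral). It does follow from the scaled interior $C^{1,\sfrac12}$ estimate of Theorem~\ref{t:ACS} combined with the monotone bound $H^{x_0}(\rho)/\rho^{d-1+4m}\le C$, which gives $\|\nabla u\|_{L^\infty(B_\rho(x_0))}\le C\rho^{2m-1}$ and hence $\|\nabla_\theta c_r\|_{L^\infty}\le C$ uniformly in $x_0\in K$ and $r$. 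You should say this explicitly; it is the one genuinely nontrivial verification in Step~1 (and one the paper itself passes over quickly, bounding only $\GG_{2m}(u_{x_0,r})$).
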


\begin{proof} We divide the proof in three steps.
	
	{\it Step 1. Applicability of epiperimetric inequality at every scale.} Let $\int_{B_1}u^2\,dx=\Theta_0$. Then, by the monotonicity of $\frac{H^{x_0}(r)}{r^{d-1+2\lambda}}$, for every $\lambda$ (see Lemma \ref{lem:freq}), we deduce that 
	\begin{align*}
	\Theta_0	
		&\geq \int_{B_{R}(x_0)}u^2\,dx\geq \int_{\sfrac{R}2}^{R}H^{x_0}(r)\,dr\geq \left( \sfrac{R}2 \right)^{d-1+2\lambda} \,\frac{H^{x_0}(\sfrac{R}2)}{\sfrac{R}2},
	\end{align*}
	where $R:=\dist(x_0,\de B_1)$. 
	 In particular we have, using again the monotonicity of $\frac{H^{x_0}(r)}{r^{d-1+2\lambda}}$,
	 $$
	 0\leq \frac{H^{x_0}(r)}{r^{d-1+2\lambda}} \leq \left( \frac2{R} \right)^{d-1+2\lambda}\,	\Theta_0\,,\qquad \mbox{for every }0<r<\sfrac{R}2\mbox{ and }x_0\in B_1\,.
	 $$
	 For what concerns $\GG^{x_0}_{\lambda}$ notice that
	 $$
	 \GG_{\lambda}^{x_0}(R)\leq R^{d-2+2\lambda} \int_{B_1}|\nabla u|^2\,dx\,,\qquad \mbox{for every }x_0\in B_1\,.
	 $$
	 Taking $\lambda=2m$, it follows that for every $0<r_0\le 1$ and for every $x_0\in B_{1-r_0}$, we can apply \eqref{e:epi:2m} for every $0<r<r_0$ and every rescaling $\ds u_{x_0,r}(x)=\frac{u(x_0+rx)}{r^{2m}}$ with $\Theta$ depending on $r_0$, $d$, $m$ and $\|u\|_{H^1(B_1)}$.
	\smallskip
	
	{\it Step 2. Closeness of the blow ups for a given point $x_0$.} Let $r_0>0$ and
	$x_0 \in B_{1-r_0}$ and let $r\in (0, r_0]$. Then by Step 1 we can apply \eqref{e:epi:2m} to $\ds u_{x_0,r}$ for every $0<r<r_0$. We claim that
	$$	\| u_{x_0,t}-u_{x_0,s} \|_{L^1(\partial B_1)} \leq C\, (-\log(t/r_0))^{-\frac{1-\gamma}{2\gamma}} \qquad  \mbox{for all}\quad 0< s<t <r_0\,.
	$$
	
\noindent 	We assume $x_0=0$ without loss of generality, we fix $m\in \N$ and write $\GG(r)=\GG_{2m}^{x_0}(r,u)$. By \eqref{e:monot} 
	\begin{equation}\label{e:mat_ema}
	\frac{d}{dr}\GG(r)=\frac{(d-2+4m)}{r}\left(  \GG(z_r)-\GG(r) \right)+\underbrace{\frac1r\int_{\de B_1} \left( \nabla u_r\cdot \nu-2m\,u_r \right)^2\,d\HH^{d-1}}_{=:f(r)}
	\end{equation}
	and the epiperimetric inequality of Theorem~\ref{t:epi:2m}, there exists a radius $r_0>0$ such that for every $r \leq r_0$
	\begin{equation}
	\label{eqn:epi-applied}
	\frac{d}{dr}\GG(r) \geq \frac{d-2+4m}{r} \big( \GG(z_r)-\GG(r) \big) +f(r)\geq \frac{c}{r} \GG(r)^{1+\gamma} +2f(r)
	\end{equation}
	where $c=\eps\,\Theta^{-\gamma}(d-2+4m)$ and $\gamma \in (0,1)$ is a dimensional constant. In particular we obtain that
	\begin{equation}\label{e:mon_rem}
	\frac{d}{dr}\Big(\frac{-1}{\gamma \GG(r)^\gamma} - c \log r\Big) =\frac{1}{ \GG(r)^{1+\gamma}}\frac{d}{dr}\GG(r)- \frac{c}{r} \geq \frac{1}{ \GG(r)^{1+\gamma}} f(r) \geq 0
	\end{equation}
	and this in turn implies that $-{\GG(r)^{-\gamma}} - c\gamma \log r$ is an increasing function of $r$, namely that $\GG(r)$ decays as 
	\begin{equation}\label{eqn:e-logar}
	\GG(r) \leq ({\GG(r_0)^{-\gamma}+c \gamma \log r_0-c \gamma \log r})^{\frac {-1}{\gamma}} \leq (-c \gamma \log (r/r_0))^{\frac {-1}{\gamma}}.
	\end{equation}
	For any $0<s<t<r_0$ we estimate the $L^1$ distance between the blow-up at scales $s$ and $t$ through the Cauchy-Schwarz inequality and the monotonicity formula \eqref{e:mat_ema}
	\begin{align}\label{e:imp_1}
	\int_{\de B_1}\left| u_t-u_s \right| \,d\,\HH^{d-1}
	&\leq \int_{\de B_1}\int_s^t\frac{1}{r}\left|  x\cdot \nabla u_r-2u_r   \right| \,dr\,d\HH^{n-1} \notag\\
	&\leq \big({d \omega_d}\big)^{1/2} \int_s^t  r^{-\sfrac12} \left(\frac 1 r \int_{\de B_1} \left|  x\cdot \nabla u_r-2u_r   \right|^2   \,d\HH^{d-1} \right)^{1/2}\,dr \notag\\
	&\leq \Big({ \frac{d \omega_d}{2}} \Big)^{1/2} \int_s^t r^{-\sfrac{1}{2}}(\GG'(r))^{1/2}\, dr \\
	& \leq \Big({ \frac{d \omega_d}{2}} \Big)^{1/2} (\log(t)-\log(s))^{1/2} (\GG(t)-\GG(s))^{1/2} \notag
	\,.
	\end{align}
	Let $0<s<t<r_0/2$ and $0\le j\le i$ be such that $s/r_0\in [2^{-2^{i+1}}, 2^{-2^i})$ and $t/r_0\in [2^{-2^{j+1}}, 2^{-2^j})$. Applying the previous estimate \eqref{eqn:e-logar} to the exponentially dyadic decomposition, we obtain 
	\begin{align}\label{e:imp_2}
	\int_{\de B_1}\left| u_t-u_s \right| \,d\,\HH^{d-1}&\leq \int_{\de B_1}\left| u_t- u_{2^{-2^{j+1}}r_0} \right| \,d\,\HH^{d-1}  \notag
	\\&+ \int_{\de B_1}\left| u_{2^{-2^{i}}r_0}-u_s \right| \,d\,\HH^{d-1} +  \sum_{k=j+1}^{i-1}  \int_{\de B_1}\left| u_{2^{-2^{k+1}}r_0}-u_{2^{-2^{k}}r_0} \right| \,d\,\HH^{d-1} \notag
	\\
	&\leq C \sum_{k=j}^{i} \left(\log\big(2^{-2^{k}}\big)- \log\big(2^{-2^{k+1}}\big)\right)^{1/2}\left(\GG\big(2^{-2^{k}}r_0\big)- \GG\big(2^{-2^{k+1}}r_0\big) \right)^{1/2}\notag
	\\
	&\leq C \sum_{k=j}^{i} 2^{k/2}\GG\big(2^{-2^{k}}r_0\big)^{1/2}
	\leq C \sum_{k=j}^{i} 2^{(1-1/\gamma)k/2} \notag
	\\&\leq C 2^{(1-1/\gamma)i/2} \leq C (-\log(t/r_0))^{\frac{\gamma-1}{2\gamma}}
	\,,
	\end{align}
	where $C$ is a constant, depending on $d$, $m$, $r_0$ and $\|u\|_{H^1(B_1)}$, that may vary from line to line.
	
	\smallskip
	{\it Step 3. Conclusion.} 
	We notice that for $t \leq r_0^2$, we have $\log(t/r_0) \leq \frac12 \log t$, so that  
	$$	\| u_{x_0,t}-u_{x_0,s} \|_{L^1(\partial B_1)} \leq C\, (-\log(t))^{-\frac{1-\gamma}{2\gamma}}\qquad \mbox{for every}\quad 0<s<t<r_0^2.  $$
	Since $u_{x_0,t}$ is bounded in $L^2(\partial B_1)$ for every $t\le r_0$, by possibly enlarging the constant $C$, the above inequality holds for $0<s<t<r_0$. 
\end{proof} 
	
\subsection{Non-degeneracy of the blow-up} We now use the previous Proposition to prove that the blow-up limits are non-trivial. This is the only part of the proof of Theorem \ref{t:main} where the frequency of the point plays a role.

\begin{lemma}[Non-degeneracy]\label{lem:nondeg}
	Let $u\in H^{1}(B_1)$ be a minimizer of $\mathcal{E}$ and let $x_0\in \mathcal{S}^{\lambda}$, where $\lambda\in\{\sfrac32\}\cup\{2m\,:\,m\in\N\}$. Then the following strict lower bound holds
	$$
	H^{x_0}_0:=\lim_{r\to 0}\frac{H^{x_0}(r)}{r^{d-1+2\lambda}}>0\,.
	$$
	In particular, since by the strong $L^2(\de B_1)$ convergence of $u_{x_0,r}$ to the unique blow up $p_{x_0}$ we have $H_0:=\|p_{x_0}\|^2_{L^2(\de B_1)}$, it follows that $p_{x_0}$ is non-trivial.
\end{lemma}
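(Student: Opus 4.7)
Set $\Phi(r):=H^{x_0}(r)/r^{d-1+2\lambda}$, so that $H_0^{x_0}=\lim_{r\to 0}\Phi(r)$. By Lemma~\ref{lem:freq} and the fact that $N^{x_0}(r)\geq\lambda$ (hence $\GG_\lambda^{x_0}(r)\geq 0$), $\Phi$ is monotone non-decreasing, so the limit exists. A preliminary observation is that $\Phi(r_0)>0$ for every sufficiently small $r_0$: otherwise $u_{r_0}\equiv 0$ on $\partial B_1$ and, by uniqueness of the Signorini minimizer with vanishing boundary trace, $u\equiv 0$ in $B_{r_0}(x_0)$, contradicting $x_0\in\Gamma(u)$. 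The plan is to handle the two cases separately.

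\textbf{Case $\lambda=\sfrac32$.} Applying Theorem~\ref{t:epi:3/2} to the trace of $u_r$ and combining with the minimality of $u_r$ and the identity \eqref{e:monot}, one obtains a differential inequality of the form
\[
\frac{d}{dr}\GG_{\sfrac32}^{x_0}(r)\geq \frac{c}{r}\GG_{\sfrac32}^{x_0}(r)
\]
for an explicit constant $c=c(d)>0$, which yields the polynomial decay $\GG_{\sfrac32}^{x_0}(s)\leq (s/r_0)^c\,\GG_{\sfrac32}^{x_0}(r_0)$. Using \eqref{e:der_H} and integrating,
\[
\Phi(r_0)-H_0^{x_0}=2\int_0^{r_0}\frac{\GG_{\sfrac32}^{x_0}(s)}{s}\,ds\leq \frac{2}{c}\GG_{\sfrac32}^{x_0}(r_0)=\frac{2(N^{x_0}(r_0)-\sfrac32)}{c}\Phi(r_0),
\]
where we used Lemma~\ref{lem:freq} in the last equality. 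Since $N^{x_0}(r_0)\to\sfrac32$ as $r_0\to 0$, for $r_0$ small enough $2(N^{x_0}(r_0)-\sfrac32)/c<\sfrac12$, which gives $H_0^{x_0}\geq \Phi(r_0)/2>0$.

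\textbf{Case $\lambda=2m$.} Here the analogous integration produces only $\Phi(r_0)-H_0^{x_0}\leq C\,\GG_{2m}^{x_0}(r_0)^{1-\gamma}$, which does not close the argument directly because of the sublinear power. I would instead argue by contradiction: suppose $H_0^{x_0}=0$. Then $\tilde u_r:=u_r/\sqrt{\Phi(r)}$ has $\|\tilde u_r\|_{L^2(\partial B_1)}=1$ and, thanks to the frequency bound $\int_{B_1}|\nabla \tilde u_r|^2=N^{x_0}(r)$, is uniformly bounded in $H^1(B_1)$. By standard compactness and the monotonicity of $\GG_{2m}$, along a subsequence $\tilde u_{r_k}\to \tilde p$ strongly in $L^2(\partial B_1)$ with $\tilde p\in\KK_{2m}$ and $\|\tilde p\|_{L^2(\partial B_1)}=1$. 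By Theorem~\ref{t:freq}, $\tilde p$ is a $2m$-homogeneous harmonic polynomial, non-negative on $B_1'$. Now consider the Monneau-type quantity
\[
M(r):=\int_{\partial B_1}(u_r-\tilde p)^2\,d\HH^{d-1}=\Phi(r)-2\int_{\partial B_1}u_r\tilde p\,d\HH^{d-1}+1.
\]
Using $\partial_\nu\tilde p=2m\tilde p$ on $\partial B_1$, $\Delta\tilde p=0$ in $B_1$, and integrating by parts as in the derivation of \eqref{e:monot}, one computes
\[
M'(r)=\frac{2}{r}\GG_{2m}(u_r-\tilde p)+\frac{2}{r}\int_{B_1}\tilde p\,(-\Delta u_r),
\]
both terms being non-negative: the first because $\GG_{2m}(u_r-\tilde p)=\GG_{2m}(u_r)\geq 0$ (a direct expansion, using $\GG_{2m}(\tilde p)=0$ and $\partial_\nu\tilde p=2m\tilde p$), the second because $-\Delta u_r$ is a non-negative measure on $\{u_r=0\}\cap B_1'$ where $\tilde p\geq 0$. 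Thus $M$ is monotone non-decreasing. On the other hand, since $\Phi(r_k)\to 0$, one has $\int u_{r_k}\tilde p=\sqrt{\Phi(r_k)}\int \tilde u_{r_k}\tilde p\to 0$, so $\lim_{k\to\infty}M(r_k)=1$. Monotonicity then forces $M(r)\geq 1$, i.e.\ $\int u_r\tilde p\leq \Phi(r)/2$, for every small $r$; but for $r=r_k$ with $k$ large, $\int u_{r_k}\tilde p\geq (1-o(1))\sqrt{\Phi(r_k)}$, which gives $1-o(1)\leq \sqrt{\Phi(r_k)}/2\to 0$, a contradiction.

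\textbf{Main obstacle.} The delicate point is the Monneau-type monotonicity of $M$ in the case $\lambda=2m$: while the harmonicity and non-negativity of $\tilde p\in \KK_{2m}$ make the sign analysis of the two terms in $M'(r)$ transparent, the analogous computation for $\lambda=\sfrac32$ with $\tilde p=C h_e$ would involve the singular measure $\Delta h_e$ supported on $B_1'\cap\{x\cdot e<0\}$, producing a correction term of indeterminate sign; this is the reason the two cases require genuinely different arguments (decay plus direct integration for $\sfrac32$, and a Monneau formula combined with blow-up extraction for $2m$).
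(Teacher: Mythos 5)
Your proof is correct, but it follows a genuinely different route from the paper's. The paper argues by contradiction in both cases at once: assuming $H_0^{x_0}=0$, it renormalizes $u_r:=u(x_0+r\cdot)/h(r)$ with $h(r)=(H(r)/r^{d-1})^{\sfrac12}$, extracts a nontrivial $\lambda$-homogeneous limit $p_\lambda$ with $\|p_\lambda\|_{L^2(\partial B_1)}=1$, and then applies the uniform rate of convergence of Proposition~\ref{p:decay} to \emph{each} $u_r$ (whose own blow-up limits vanish under the contradiction hypothesis) to show that $\int_{\partial B_1}[u_r]_\rho^2$ is small uniformly in $r$ for $\rho$ small, contradicting $\|p_\lambda\|_{L^2(\partial B_1)}=1$ by a two-parameter limit in $\rho$ and then $r$. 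You instead avoid Proposition~\ref{p:decay} entirely: for $\lambda=\sfrac32$ you integrate the geometric decay of $\GG_{\sfrac32}$ against \eqref{e:der_H} and absorb the error using $\GG_\lambda(r)=\Phi(r)(N(r)-\lambda)$ and $N(r_0)\to\sfrac32$ — a direct, non-contradiction argument that the paper does not give; for $\lambda=2m$ you run the Garofalo--Petrosyan Monneau monotonicity against a normalized blow-up limit $\tilde p$, which requires invoking the classification $\KK_{2m}$ of Theorem~\ref{t:freq} (legitimately available, as it is proved before this lemma) to know that $\tilde p$ is a nonnegative harmonic polynomial, whereas the paper's argument needs no structural information on the limit. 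What each approach buys: the paper's is uniform in $\lambda$ and stays entirely within the epiperimetric-inequality framework it is advertising; yours is more classical in the $2m$ case and more quantitative in the $\sfrac32$ case (giving the explicit bound $H_0\ge\Phi(r_0)/2$), and your closing remark correctly identifies why the Monneau computation does not transfer to $\lambda=\sfrac32$, where $\Delta h_e$ is a signed correction supported on $B_1'\cap\{x\cdot e<0\}$. All the individual steps (the preliminary positivity of $\Phi$, the sign analysis of the two terms in $M'(r)$, and the final contradiction $1-o(1)\le\sqrt{\Phi(r_k)}/2$) check out.
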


\begin{proof}
	Without loss of generality we can suppose that $x_0=0$. We give the proof for $\lambda:=2m=N(0)$ for some $m\in\N$, the case $\lambda=\sfrac32$ being analogous. Assume by contradiction that 
	$$
	h(r):=\left(\frac{H(r)}{r^{d-1}}\right)^{\sfrac12}=o(r^{\lambda})
	$$
	and consider the sequence $\ds u_r(x):=\frac{u(rx)}{h(r)}$. It follows that $\|u_r\|_{L^2(\de B_1)}=1$ for every $r$, and so, by the monotonicity of the frequency function
	$$
	\int_{B_1} |\nabla u_r|^2\,dx =\frac1{r^{d-2}}D(r)\leq N(1)\, \frac1{r^{d-1}}H(r)\leq N(1)\,,
	$$
	so that, up to a not relabeled subsequence, $u_r$ converges weakly in $H^1(B_1)$ and strongly in $L^2(\partial B_1)$ to some function $p_\lambda\in H^1(B_1)$ such that $\|p_{\lambda}\|_{L^2(\de B_1)}=1$. Moreover, since $N(0)=\lambda$, $p_{\lambda}$ is a $\lambda$-homogeneous function. Notice also that due to Theorem \ref{t:ACS} the convergence is locally uniform in $B_1$. Next, for every $u_r$ consider its blow-up sequence $\ds[u_r]_\rho(x):=\rho^{-\lambda}{u_r(\rho x)}$. By Proposition \ref{p:decay}, we know that, for every $r>0$, there exists a unique blow-up limit $p_{\lambda,r}=\lim_{\rho\to0}[u_r]_\rho$. Moreover, since all the functions $u_r$ are uniformly bounded in $H^1(B_1)$, $\|u_r\|_{H^1(B_1)}^2\le N(1)+1$, there is a constant $C$ depending on the dimension, $\lambda$ and $N(1)$ such that
	\begin{equation}\label{e:poop}
	\| \ds[u_r]_t-p_{\lambda,r} \|_{L^2(\partial B_1)}^2 \leq C\, (-\log(t))^{-\frac{1-\gamma}{\gamma}} \qquad  \mbox{for all}\quad 0<t < 1\,,
	\end{equation}
	where we used the regularity of $u$ to replace the $L^1$-norm from Proposition \ref{p:decay} with the $L^2$-norm.
	Using our contradiction assumption and the strong convergence of $[u_r]_\rho$ to $p_{\lambda,r}$ in $L^2(\de B_1)$, we have
	$$
	\|p_{\lambda,r}\|_{L^2(\de B_1)}=\lim_{\rho\to 0} \frac{1}{\rho^{d-1+2\lambda}} \int_{\de B_\rho}u_r^2\,d\HH^{d-1}=\frac{r^{d-1+2\lambda}}{\int_{\de B_r}u^2\,d\HH^{d-1}}\,\lim_{\rho\to 0}\frac{1}{(r\,\rho)^{d-1+2\lambda}}\int_{\de B_{r\rho}}u^2\,d\HH^{d-1}=0
	$$ 
	for every $r>0$. It follows that, for fixed $\rho>0$ (that we will choose small enough), we have
	\begin{align*}
	1
		&=\frac1{\rho^{d-1+2\lambda}}\int_{\de B_\rho} p_\lambda^2\,d\HH^{d-1}
		\leq \frac2{\rho^{d-1+2\lambda}}\int_{\de B_\rho} |p_\lambda-u_r|^2\,d\HH^{d-1}+\frac2{\rho^{d-1+2\lambda}}\int_{\de B_\rho} u_r^2\,d\HH^{d-1}	\\
		&= \frac2{\rho^{d-1+2\lambda}}\int_{\de B_\rho} |p_\lambda-u_r|^2\,d\HH^{d-1}+2\int_{\de B_1}  [u_r]_\rho^2\,d\HH^{d-1}\\
		&\leq   \frac2{\rho^{d-1+2\lambda}}\int_{\de B_\rho} |p_\lambda-u_r|^2\,d\HH^{d-1}+C\, (-\log(\rho))^{-\frac{1-\gamma}{\gamma}} ,
	\end{align*}
	where the first equality follows from the $\lambda$-homogeneity of $p_\lambda$ and the last inequality from the rate of decay of $ [u_r]_\rho$ to $p_{\lambda,r}\equiv 0$ in \eqref{e:poop}. Choosing first $\rho>0$ and then $r=r(\rho)>0$ we reach a contradiction.
\end{proof}

\subsection{Proof of Theorem \ref{t:main}}
	Let $m\in\N_+$ be fixed and let be $x_1,x_2\in \freq^{2m}$. Let $p_{x_1}$ and $p_{x_2}$ be the unique blow-ups of $u$ at $x_1$ and $x_2$ respectively. Then we can write $p_{x_1}=\lambda_1\,p_1$ and $p_{x_2}=\lambda_2\,p_2$, where $p_1$ and $p_2$ are normalized such that $p_1,p_2\in \HH_{2m}\setminus \{0\} $. Notice that 
	\begin{equation}\label{e:0}
	\|p_{1} - p_{2}\|_{L^\infty(B_1)} \leq c(d) \int_{\partial B_1} | p_{1}(x) - p_{2}(x)| \, d\HH^{d-1}(x)\,,
	\end{equation}
	since $\|p_1\|_{L^2(\de B_1)}=1=\|p_2\|_{L^2(\de B_1)}$ and they are $2m$-homogeneous.
	
	Next notice by the triangular inequality
	$$\| p_{x_1} - p_{x_2}\|_{L^1(\partial B_1)} \leq \| u_{x_1, r} - p_{x_1}\|_{L^1(\partial B_1)} + \| u_{x_1, r}-u_{x_2, r}\|_{L^1(\partial B_1)} + \| u_{x_2, r} - p_{x_2}\|_{L^1(\partial B_1)}  
	$$
	
	Recalling that $u \in C^{1,\sfrac12}$ and that $\nabla u(x_1)= 0$, we estimate the term in the middle with
	\begin{equation}\label{e:1}
	\begin{split}
	\| u_{x_1, r}-u_{x_2, r}\|_{L^1(\partial B_1)} &\leq \int_{\partial B_1} \int_0^1  \frac{ |\nabla u (x_1+rx+ t(x_2-x_1))| |x_2-x_1|}{r^{2m}} \, dt \, d\HH^{d-1}(x)
	\\
	&\leq
	C\| u \|_{C^{1,\sfrac12}(B_r(x_1))} \frac{(r+|x_2-x_1|)^{\sfrac12}\, |x_2-x_1|}{r^{2m}}\leq	C|x_1-x_2|^{\sfrac1{8m}},
	\end{split}
	\end{equation}
	where we have set $r:= |x_1-x_2|^{\sfrac1{4m}}$. Moreover, if we assume that $r_0$ satisfies the inequality $ |r_0| (-\log |r_0|)^{-\frac{1-\gamma}{2\gamma}} \leq \dist (\{x_1,x_2\} , \partial B_1)$, then by Proposition~\ref{p:decay} we see that
	\begin{equation}\label{e:2}
	\begin{split}
	\| u_{x_1, r} - p_{x_1}\|_{L^1(\partial B_1)} + \| u_{x_2, r} - p_{x_2}\|_{L^1(\partial B_1)} 
	&\leq C \, (-\log(r))^{-\frac{1-\gamma}{2\gamma}}=  C \big(-\log |x_1-x_2|\big)^{-\frac{1-\gamma}{2\gamma}}
	\end{split}
	\end{equation}
	Putting together this inequality with \eqref{e:1} and \eqref{e:2}, we find
	\begin{equation}\label{e:3}
	\| p_{x_1} - p_{x_2}\|_{L^1(\partial B_1)} \leq  C (-\log|x_1-x_2| )^{-\frac{1-\gamma}{2\gamma}}\,.
	\end{equation}
	 
	\noindent  Next, using \eqref{e:der_H} and \eqref{eqn:e-logar} we can estimate
	 $$
	 \frac{d}{dr}\left(\frac{H^{x_i}(r)}{r^{d-1+4m}} \right)=2\frac{\GG_{2m}^{x_i}(r)}{r}\leq \frac C{r(-\gamma \log (r/r_0))^{\frac {1}{\gamma}}}\,,
	 $$
	 which integrated gives
	 \begin{equation}\label{e:decay_H}
	 \frac{H^{x_i}(t)}{t^{d-1+4m}}-\lambda_{x_i}^2\leq C\, (-\log t)^{-\frac{1-\gamma}{\gamma}}\qquad \mbox{for all }0<t<\dist(x_i,\de B_1)\,.
	 \end{equation}
	 Notice that in the previous integration we have used the fact that, by definition of $p_i$ and by the strong convergence in $L^2(\de B_1)$ of the blow ups we have
	 $$
	 \lim_{r\to 0}\frac{H^{x_i}(r)}{r^{d-1+4m}}=\|p_{x_i}\|_{L^2(\de B_1)}^2=\lambda_{x_i}^2\,.
	 $$
	Using \eqref{e:1} together with \eqref{e:decay_H}, we get
	\begin{align}\label{e:4}
	|\lambda_{x_1}-\lambda_{x_2}|^2
		&\leq C\, \left| \lambda_{x_1}^2-\frac{H^{x_1}(r)}{r^{d-1+4m}}\right| +C\,  \left|\frac{H^{x_1}(r)}{r^{d-1+4m}}- \frac{H^{x_2}(r)}{r^{d-1+4m}} \right|   +C\,  \left| \lambda_{x_2}^2-\frac{H^{x_2}(r)}{r^{d-1+4m}}\right|\notag\\
		&\leq C \,(-\log(r))^{-\frac{1-\gamma}{\gamma}}+C\, \int_{\de B_1}|u_{x_1,r}^2-u_{x_2,r}^2|\,d\HH^{d-1} \notag\\
		&\leq  C \,(-\log(r))^{-\frac{1-\gamma}{\gamma}}+C\, \| u_{x_1, r}-u_{x_2, r}\|_{L^1(\partial B_1)}^2\stackrel{\eqref{e:1}}{\leq} C (-\log|x_1-x_2| )^{-\frac{1-\gamma}{\gamma}}\,,
	\end{align}
	where the choice of $r$ is the same as above.
	
	Finally, using \eqref{e:0}, \eqref{e:3} and \eqref{e:4} we easily conclude that
	\begin{equation}\label{e:5}
	\|p_1-p_2\|_{L^\infty(B_1)} \leq C\, (-\log|x_1-x_2| )^{-\frac{1-\gamma}{\gamma}} \qquad \mbox{for every }x_1,x_2\in K\cap \freq^{2m}\Subset B_1
	\end{equation}
	where the constant $C$ depends on $m,d,\dist(K,\de B_1)$. 
	
	Now consider the collection of points $\freq^{2m}_k$, for some $m\in \N$ and $0<k<d-2$ and notice that, for every $K\Subset B_1\cap\freq^{2m}_k $, we can apply the Whitney extension theorem \cite[Whitney extension theorem]{Fefferman} to extend the function $(\tilde p_{x})_{x\in K}\subset \HH_{2m}$, where $\lambda _x \tilde p_x=p_x$ is the unique blow up at $x$ to get a function $F\in C^{2m,\log}(\R^d)$, such that $\de^\alpha F(x)=\de^\alpha \tilde p_x(0)$.  Since $x\in \freq^{2m}_k$ and the blow-ups are non-degenerate (see Lemma \ref{lem:nondeg}), there are $d-1-k$ linearly independent vectors $e_i\in \R^{d-1}$, $i=1,\dots,d-1-k$, such that 
	$$
	e_i\cdot \nabla_{x'}\tilde p_x\neq 0\qquad \mbox{on }\R^d\,. 
	$$
	It follows that there are multi-indices $\beta_i$ of order $|\beta_i|=2m-1$, such that $\de_{e_i} \de^{\beta_i}F(x)= \de_{e_i} \de^{\beta_i} \tilde p_x(0)\neq 0$. On the other hand
	$$
	\freq^{2m}\cap K=K\subset \bigcap_{i=1}^{d-1-k}\{\de^{\beta_i}F=0 \}
	$$
	so that an application of the implicit function theorem in a neighborhood of each point $x\in K$ combined with the arbitrary choice of $K$ yields that for every $x\in \freq^{2m}$ there exists $r=r(x)>0$ such that 
	$$
	\freq^{2m}_k\cap B_r(x)\quad \mbox{is contained in a $k$-dimensional $C^{1,\log}$ submanifold}\,. 
	$$
	From here the conclusion follows.\qed

\bibliographystyle{plain}
\bibliography{references-Cal}

\end{document}